\newtheorem{Theorem}{Theorem}%[section]
\newtheorem{Corollary}[Theorem]{Corollary}
\newtheorem{Lemma}[Theorem]{Lemma}
\theoremstyle{definition}
\newtheorem{Definition}[Theorem]{Definition}
\newtheorem{Example}[Theorem]{Example}
\theoremstyle{remark}
\newtheorem{Remark}[Theorem]{Remark}
\DeclareMathOperator{\Hom}{Hom}
\DeclareMathOperator{\DM}{DM}
\DeclareMathOperator{\relint}{relint}
\DeclareMathOperator{\vol}{{{vol}}}
\newcommand{\abs}[1]{\left|#1\right|}
\DeclareMathOperator{\rank}{rk}
\DeclareMathOperator{\lcm}{lcm} % least common multiple
\newcommand{\st}{s.\,t.\ } % such that
\newcommand{\ie}{\textit{i.\,e.\ }} % id est
\newcommand{\eg}{\textit{e.\,g.\ }} % exemplum gratum
\newcommand{\N}{\mathbb{N}}
\newcommand{\Z}{\mathbb{Z}}
\newcommand{\R}{\mathbb{R}}
\newcommand{\K}{\mathbb{K}}
\newcommand{\Acal}{\mathcal{A}}
\newcommand{\Dcal}{\mathcal{D}}
\newcommand{\Pcal}{\mathcal{P}}
\newcommand{\Vcal}{\mathcal{V}}
\newcommand{\Fcal}{\mathcal F}
\newcommand{\Gcal}{\mathcal G}
\newcommand{\tutte}{{\mathfrak T}} % 
\newcommand{\aritutte}{{\mathfrak M}} % 
\newcommand{\matroidclass}{{\mathbb M}} %  
\author{Spencer Backman}
\address{% 
Hausdorff Center for Mathematics, 53115 Bonn, Germany}
\email{spencerbackman@gmail.com}
\author{Matthias Lenz}
\address{Universit\'e de Fribourg, D\'epartement de Math\'ematiques, 1700 Fribourg, 
Switzerland}
\email{matthias.lenz@unifr.ch}
\thanks{The second author was supported by a Swiss Government Excellence Scholarship for Foreign Scholars 
 and subsequently by a
 fellowship within the postdoc program of the German Academic Exchange Service 
 (DAAD)}
\title[A convolution formula for arithmetic Tutte polynomials]
{%
A convolution formula for Tutte polynomials of  arithmetic matroids 
and other combinatorial structures
}
\date{\today}
\address{%
}
\keywords{Tutte polynomial, convolution formula, matroid, arithmetic matroid, delta-matroid, zonotope, nowhere-zero flow, coloring}
\subjclass[2010]{Primary: 
05B35. % %, % Matroids, geometric lattices
Secondary:
05C15, %Coloring of graphs and hypergraphs
05C21, %Flows in graphs
05C31, % Graph polynomials
05C10, % %Planar graphs; geometric and topological aspects of graph theory
52C35%          Arrangements of points, flats, hyperplanes
}
\begin{document}

\begin{abstract}
 In this note we generalize the convolution formula for the Tutte polynomial of
 Kook--Reiner--Stanton and Etienne--Las~Vergnas 
 to a more general setting that   
 includes both arithmetic matroids and delta-matroids.
 As   corollaries, we obtain new proofs of two positivity results
 for pseudo-arithmetic matroids and
  a combinatorial interpretation of the arithmetic Tutte polynomial at infinitely many points  in terms of arithmetic flows and colorings.
  We also exhibit connections with a decomposition of Dahmen--Micchelli spaces and lattice point counting in zonotopes.
\end{abstract}

\maketitle

\section{Introduction}

 Matroids are combinatorial structures that capture and abstract the notion of independence.
 They were introduced in the 1930s, and since then 
 they have become an important part
 of combinatorics and other areas of pure and applied mathematics.
 The Tutte polynomial is an important matroid invariant. Many invariants of graphs and hyperplane arrangements can be obtained 
 as specializations of the Tutte polynomial 
 \cite{brylawski-oxley-1992}.
 Kook--Reiner--Stanton  \cite{kook-reiner-stanton-1999} and Etienne--Las~Vergnas \cite{etienne-las-vergnas-1998}
 found a so-called convolution formula for the Tutte polynomial $\tutte_M$ of a matroid $M$:
 \begin{equation}
  \tutte_M(x,y) = \sum_{A\subseteq M}  \tutte_{M|_A}(0, y) \tutte_{M/A}(x,0).
 \end{equation}
 In this note  we will generalize this formula to the far more general setting of ranked sets with multiplicities.
 
A \emph{ranked set with multiplicities} is a finite set $M$, together with a rank function 
$\rank : 2^M \to \mathbb{Z}$
that satisfies $\rank(\emptyset)=0$ and a multiplicity function $m : 2^M \to R$, where $R$ denotes a commutative ring with $1$.

This setting contains the following combinatorial structures as special cases:
 \begin{itemize}
  \item \emph{Matroids}: if $\rank$ satisfies the rank axioms of a matroid, $R=\Z$,  and $m \equiv 1$ (\eg \cite{MatroidTheory-Oxley}).
  \item \emph{Pseudo-arithmetic matroids}: if $(M,\rank)$ is a matroid and $m : 2^M \to \R_{\ge 0}$ satisfies  certain positivity conditions %
  \cite{branden-moci-2014}.
  \item \emph{Quasi-arithmetic matroids}: if $(M,\rank)$ is a matroid and $m : 2^M \to \Z_{\ge 1}$ satisfies certain divisibility conditions \cite{branden-moci-2014}.
  \item \emph{Arithmetic matroids}: if $(M,\rank,m)$ is both a pseudo-arithmetic matroid and a quasi-arithmetic matroid  \cite{branden-moci-2014,moci-adderio-2013}.
  \item \emph{Integral polymatroids}: if $\rank$ is the submodular function that defines an integral polymatroid, $R=\Z$ and $m \equiv 1$ (\eg \cite[Chapter~44]{schrijver-co-volB})
  \item \emph{Rank functions of delta-matroids and ribbon graphs:} 
     one can choose $m\equiv 1$ and  $\rank=\rho$, the 
     rank function of an even delta-matroid $(M,\Fcal)$ in the sense of 
       Chun--Moffatt--Noble--Rueckriemen
     \cite{chun-moffat-noble-rueckriemen-2014,krajewski-moffatt-tanasa-2015}.
     Ribbon graphs \cite{bollobas-riordan-2002} define delta-matroids
     in a similar way as 
     graphs define matroids \cite{bouchet-1989,chun-moffat-noble-rueckriemen-2014}.
\end{itemize}
See Section~\ref{Section:Background} for definitions.
Sometimes, we will write $\rank_M$ and $m_M$ to denote the rank and multiplicity functions of $M$ and we will occasionally write $M$ instead of 
$(M,\rank_M, m_M)$
to denote the
ranked set with multiplicities.

We will show that the convolution formula of  Kook--Reiner--Stanton %
and Etienne--Las~Vergnas %
 holds in a very general setting. The only thing we require is that restriction and contraction are defined in the usual way:
let $A\subseteq M$. The \emph{restriction} $M|_A$ is the ranked set with multiplicities 
$(A,\rank|_A, m|_A)$, where  $\rank|_A$ and $m|_A$ denote the restrictions of $\rank$ and $m$ to $A$.
 The \emph{contraction} $M / A$ is the
 ranked set with multiplicities
 $(M\setminus A,\rank_{M/A}, m_{M/A})$, where
 $\rank_{M/A}(B):=\rank_M(B\cup A) - \rank_M(A)$ and
  $ m_{M/A}(B) := m_{M}(B\cup A)$ for $B\subseteq M\setminus A$.

To a ranked set with multiplicities, we  associate the \emph{arithmetic Tutte function}
\begin{equation}
   \aritutte_M(x,y) = \sum_{A\subseteq M} m(A) (x-1)^{\rank(M) - \rank(A)} (y-1)^{\abs{A}-\rank(A)} \in R(x,y) 
\end{equation}
 and the \emph{Tutte function} $\tutte_M(x,y) = \sum_{A\subseteq M}  (x-1)^{\rank(M)-\rank(A)} (y-1)^{\abs{A}-\rank(A)} \in R(x,y)$.
 As usual, $R(x,y)$ denotes the ring of rational functions in $x$ and $y$ with coefficients in $R$.
 Note that $\aritutte_M(x+1,y+1)$ and $\tutte_M(x+1,y+1)$ are Laurent polynomials in $R[x^{\pm 1},y^{\pm 1}]$.
 If $\rank(A)\le \rank(M)$ and $ \rank(A) \le \abs{A} $ for all $ A\subseteq M $, then both functions are polynomials in $R[x,y]$.

 If $M$ is a matroid, $\tutte_M(x,y)$ is the usual Tutte polynomial.
 As far as we know, the
 Tutte Laurent polynomial $\tutte_M(x+1, y+1)$ of a polymatroid $M$ has not been studied yet. 
 However, other
 Tutte invariants of polymatroids have appeared in the literature
 \cite{cameron-fink-2016,oxley-whittle-1993}.
 If $M$ is a (quasi/pseudo)-arithmetic matroid,  $\aritutte_M(x,y)$ is the usual
 arithmetic Tutte polynomial \cite{branden-moci-2014,moci-adderio-2013,moci-tutte-2012}.
 The arithmetic Tutte polynomial appears in many different contexts, \eg in the study of the  combinatorics and topology of toric arrangements, 
 of cell complexes, the 
 theory of vector partition functions,  and Ehrhart theory of zonotopes 
 \cite{bajo-burdick-chmutov-2014,moci-adderio-ehrhart-2012,lenz-arithmetic-2016,moci-tutte-2012,stanley-1991}.

  If $\rank$ is the rank function of an even delta-matroid
  in the sense of Chun--Moffatt--Noble--Rueckriemen
\cite{chun-moffat-noble-rueckriemen-2014,krajewski-moffatt-tanasa-2015}, then
  $\tutte_M$ is the 
  $2$-variable Bollob\'as--Riordan polynomial of the delta-matroid    
  (see \cite{chun-moffat-noble-rueckriemen-2014} or \cite[(42)]{krajewski-moffatt-tanasa-2015}).
   A special case is the $2$-variable Bollob\'as--Riordan polynomial 
  of a ribbon graph \cite[p.~22]{krajewski-moffatt-tanasa-2015}.

\smallskip
The following theorem is our main result.
\begin{Theorem}
  \label{Theorem:ArithmeticConvolution}
  Let $(M,\rank,m)$ be a ranked set with multiplicities  (\eg an arithmetic matroid). Let $\aritutte_M$ denote its arithmetic Tutte polynomial
  and let   $\tutte_M$ denote its Tutte polynomial.
  Then
  \begin{align}
   \aritutte_M(x,y) &= \sum_{A\subseteq M}  \aritutte_{M|_A}(0, y) \tutte_{M/A}(x,0) \\
   &= \sum_{A\subseteq M}  \tutte_{M|_A}(0, y) \aritutte_{M/A}(x,0).
   \end{align}
 \end{Theorem}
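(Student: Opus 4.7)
The approach is to mimic the original Kook--Reiner--Stanton argument by expanding the right-hand side and collapsing the resulting triple sum via an elementary binomial identity. The key observation is that the multiplicity function $m$ enters only as a transparent factor attached to either the restriction or the contraction, so no axiom on $\rank$ or $m$ beyond the very definitions of restriction and contraction is ever needed, which is precisely why the formula extends to the whole class of ranked sets with multiplicities.

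First, I would substitute the definitions
\begin{align*}
 \tutte_{M|_A}(0,y) &= \sum_{B\subseteq A} (-1)^{\rank(A)-\rank(B)}(y-1)^{|B|-\rank(B)},\\
 \aritutte_{M/A}(x,0) &= \sum_{C\subseteq M\setminus A} m(A\cup C)(-1)^{|C|-\rank(A\cup C)+\rank(A)}(x-1)^{\rank(M)-\rank(A\cup C)},
\end{align*}
using $\rank_{M/A}(C)=\rank(A\cup C)-\rank(A)$ and $m_{M/A}(C)=m(A\cup C)$. Multiplying and summing over $A\subseteq M$ yields a triple sum over $(A,B,C)$ with $B\subseteq A$ and $A\cap C=\emptyset$; after discarding the even contribution $2\rank(A)$, the sign exponent reduces to $\rank(B)+|C|+\rank(A\cup C)$ modulo $2$.

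Second, I would reindex by setting $S:=A\cup C$ and viewing the sum as ranging over triples $B\subseteq A\subseteq S\subseteq M$. For fixed $B$ and $S$, the only $A$-dependent factor is $(-1)^{|S|-|A|}$, and the inner sum collapses by the binomial theorem:
\[
 \sum_{B\subseteq A\subseteq S}(-1)^{|S|-|A|}=(1-1)^{|S|-|B|},
\]
which vanishes unless $B=S$. Only the diagonal $A=B=S$ survives (with $C=\emptyset$), and the residual sign $(-1)^{2\rank(S)}=1$, so what remains is exactly $\sum_{S\subseteq M} m(S)(x-1)^{\rank(M)-\rank(S)}(y-1)^{|S|-\rank(S)}=\aritutte_M(x,y)$. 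The second identity follows from the symmetric expansion in which $m$ is attached to the restriction rather than the contraction; the reindexing and cancellation go through verbatim, since only the placement of the multiplicity factor differs.

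The main obstacle will be the careful bookkeeping of the three contributions of $-1$ (coming from $\rank(A)-\rank(B)$, from $|C|-\rank_{M/A}(C)$, and from the substitutions $x-1=-1$ and $y-1=-1$) and verifying that they combine into the clean sign $(-1)^{|S|-|A|}$ once $B$ and $S$ are fixed. Once this bookkeeping is completed, the argument reduces to the elementary binomial cancellation above and invokes no further structural property of $\rank$ or $m$, which accounts for the very wide scope of the theorem.
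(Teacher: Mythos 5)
Your proposal is correct, and it takes a genuinely different route from the paper. The paper follows Kook--Reiner--Stanton's algebraic formalism: it defines a convolution product $(f\circ g)(M)=\sum_{A\subseteq M}f(M|_A)g(M/A)$ on functions of ranked sets with multiplicities, proves associativity, shows $\zeta(x,y)^{-1}=\zeta(-x,-y)$ for $\zeta(x,y)(M)=x^{\rank(M)}y^{|M|-\rank(M)}$, writes $\tutte$ and $\aritutte$ as convolutions of $\zeta$ with weighted variants $\xi,\xi^*$ (carrying $m_M(M)$ and $m_M(\emptyset)$ respectively), and then obtains the theorem by cancelling $\zeta(-1,1)\circ\zeta(1,-1)=\delta$ inside an associativity rearrangement. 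You instead expand everything directly into a triple sum over $B\subseteq A\subseteq S$ and collapse the $A$-sum via $\sum_{B\subseteq A\subseteq S}(-1)^{|S|-|A|}=0^{|S|-|B|}$; I checked your sign bookkeeping (the exponent reduces to $\rank(B)+|C|+\rank(A\cup C)$ mod $2$, and the surviving diagonal $B=A=S$ carries sign $(-1)^{2\rank(S)}=1$) and it is right, as is your observation that the multiplicity factor ($m(B)$ in one version, $m(A\cup C)=m(S)$ in the other) never depends on $A$ and so does not interfere with the collapse. Your binomial cancellation is of course exactly the identity $\zeta(-1,1)\circ\zeta(1,-1)=\delta$ made explicit, so the two proofs have the same combinatorial core; what the paper's formulation buys is reusable machinery (the lemmas are also invoked for Theorem~\ref{Theorem:GeneralisedMainTheorem} and the corollaries), while your version is self-contained, avoids setting up the algebra of isomorphism classes, and makes maximally transparent that nothing beyond the definitions of restriction and contraction is used. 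One small point of care that you should make explicit in a final write-up: since $\rank$ is only assumed to satisfy $\rank(\emptyset)=0$, exponents such as $\rank(A)-\rank(B)$ and $|C|-\rank_{M/A}(C)$ may be negative, so the evaluations at $x=0$ and $y=0$ are evaluations of Laurent polynomials at the invertible element $-1$; this is harmless but worth a sentence.
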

Kook, Reiner, and Stanton proved this result in the case where
$(M,\rank)$ is a matroid and $m\equiv 1$ \cite{kook-reiner-stanton-1999}.
Their result also follows easily from
a theorem of Etienne and Las~Vergnas on the decomposition 
of the ground set of a matroid that has a bijective proof \cite[Theorem~5.1]{etienne-las-vergnas-1998}.
It would be interesting to give a bijective proof of our result in the case of arithmetic 
matroids.
 The result of Kook--Reiner--Stanton can also be proved using Hopf algebras
\cite{duchamp-nguyen-krajewski-tanasa-2013,kook-reiner-stanton-1999}.

In the case of even delta-matroids,
our
 theorem specializes to a convolution formula for the $2$-variable Bollob\'as--Riordan polynomial
\cite[Theorem~16(2)]{krajewski-moffatt-tanasa-2015}.

 \smallskip
 Theorem~\ref{Theorem:ArithmeticConvolution}  provides a new method to prove that the coefficients of the Tutte polynomial of a pseudo-arithmetic matroid are positive
 (\cite[Theorem~5.1]{moci-adderio-2013} and \cite[Theorem~4.5]{branden-moci-2014}).
 \begin{Corollary} %
 \label{Corollary:PositivityOfCoefficients}
   The coefficients of the  Tutte polynomial of a pseudo-arithmetic matroid are positive integers.
 \end{Corollary}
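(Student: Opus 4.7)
The plan is to apply the convolution formula from Theorem~\ref{Theorem:ArithmeticConvolution}, which expresses
\[
\aritutte_M(x,y) \;=\; \sum_{A\subseteq M} \aritutte_{M|_A}(0,y)\,\tutte_{M/A}(x,0).
\]
Since products and sums of polynomials with non-negative coefficients again have non-negative coefficients, it suffices to verify two separate positivity statements: (i) $\tutte_N(x,0)\in\Z_{\geq 0}[x]$ for every matroid $N$, and (ii) $\aritutte_N(0,y)$ has non-negative coefficients for every pseudo-arithmetic matroid $N$. Restriction and contraction preserve the matroid and the pseudo-arithmetic axioms respectively, so (i) applies to each $M/A$ and (ii) to each $M|_A$.

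Part (i) is classical: the Tutte polynomial of a matroid has non-negative integer coefficients (\eg via the internal/external activities expansion), so in particular its specialization at $y=0$ has non-negative integer coefficients.

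For part (ii), I would expand
\[
\aritutte_N(0,y) \;=\; \sum_{B\subseteq N} (-1)^{\rank(N)-\rank(B)}\, m(B)\,(y-1)^{|B|-\rank(B)}
\]
and group the terms according to the closure $\bar B$ of $B$: for any flat $F$ of $N$, the subsets $B$ with $\bar B=F$ are precisely the $B\subseteq F$ that span $F$. After expanding $(y-1)^{|B|-\rank(F)}$ binomially and regrouping, each coefficient of $y^j$ ought to split into signed sums of $m$ over intervals between a spanning subset of $F$ and $F$ itself. The pseudo-arithmetic positivity axioms, which guarantee non-negativity of exactly such M\"obius-type alternating sums of $m$, should then deliver the required non-negativity term by term.

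The principal difficulty is part (ii): the sign contributions $(-1)^{\rank(N)-\rank(F)}$ and the signs coming from the binomial expansion of $(y-1)^{|B|-\rank(F)}$ must combine so that the residual alternating sums are precisely the ones postulated to be non-negative by the pseudo-arithmetic axioms. Should the direct expansion become cumbersome, an alternative is to use M\"obius inversion on the lattice of flats of $N$ to rewrite $\aritutte_N(0,y)$ in a manifestly non-negative form, or to appeal to the molecule decomposition of pseudo-arithmetic matroids. The advantage of routing through Theorem~\ref{Theorem:ArithmeticConvolution} is that one need only control the one-variable specialization $\aritutte_N(0,y)$ rather than the full two-variable arithmetic Tutte polynomial.
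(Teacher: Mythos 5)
Your first step coincides with the paper's: apply Theorem~\ref{Theorem:ArithmeticConvolution} once and use the classical fact that $\tutte_{M/A}(x,0)$ has non-negative integer coefficients. The gap is in your part~(ii). The claim that $\aritutte_{N}(0,y)$ has non-negative coefficients for every pseudo-arithmetic matroid $N$ is not an input you can take off the shelf: it is itself a special case of the statement being proved, and it is essentially as hard as the general case. Your proposed route to it does not go through as described. Grouping the subsets $B$ by their closure $F=\bar B$ and expanding $(y-1)^{|B|-\rank(F)}$ binomially leaves, as the coefficient of $y^j$, a sum over \emph{all} spanning subsets of each flat $F$, weighted by binomial coefficients $\binom{|B|-\rank(F)}{j}$ that vary with $|B|$. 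This is not an alternating sum over a single interval $[R,S]$, so axiom (P) (cf.~\eqref{eq:Paxiom}) cannot be invoked ``term by term''; making this work requires the full molecule machinery of Br\"and\'en--Moci, i.e.\ reproving their positivity theorem rather than deducing it from the convolution formula.

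The paper avoids this by applying the convolution formula a \emph{second} time, to the factor $\aritutte_{M|_A}(0,y)$ inside the first expansion:
\begin{equation*}
\aritutte_M(x,y)=\sum_{A\subseteq M}\tutte_{M/A}(x,0)\left(\sum_{B\subseteq A}\tutte_{M|_B}(0,y)\,\aritutte_{(M|_A)/B}(0,0)\right).
\end{equation*}
Now both polynomial factors are evaluations of \emph{classical} Tutte polynomials, and the entire arithmetic content is concentrated in the single scalar $\aritutte_{(M|_A)/B}(0,0)=\sum_{C}(-1)^{\rank-|C|}m(C)$. Its non-negativity \emph{is} an easy consequence of axiom (P): partition $2^N$ into molecules $[R,S]$ with $R$ independent and $S$ spanning; each molecule contributes exactly $\rho(R,S)\ge 0$. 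If you want to salvage your one-convolution plan, replace part~(ii) by this second convolution step; otherwise you must supply a genuinely independent proof that $\aritutte_N(0,y)$ is coefficient-wise non-negative, which your sketch does not yet provide.
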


 \begin{Remark}
 \label{Remark:toricvertexdecomposition}
 Let $M$ be an arithmetic matroid that is represented by a list of vectors $X$ in some finitely generated abelian group.
 Let $\Vcal(X)$ denote the set of vertices of the corresponding generalized toric arrangement (for definitions see \cite{moci-tutte-2012}).
   If we set $x=1$, the second expression for $\aritutte_M(x,y)$ in 
    Theorem~\ref{Theorem:ArithmeticConvolution} is equivalent to 
   \cite[Lemma~6.1]{moci-tutte-2012}, which states that
   \begin{equation}
   \label{eq:arituttevertexdecomposition}
   \aritutte_M(1,y) = \sum_{ p\in \Vcal(X) } \tutte_{M_p} (1,y).
   \end{equation}
   Here,
   $M_p$ denotes the matroid represented by the sublist of $X$ that consists of all elements that define a hypersurface that contains $p$.
   This equivalence is explained in more detail in Section~\ref{Section:Proofs}.
   
   \eqref{eq:arituttevertexdecomposition} is related to 
   two decomposition formulas 
   in the theory of splines and vector partition functions: 
   the decomposition of 
   the discrete  space $\DM(X)$ into continuous $\Dcal$-spaces 
   $\DM(X) = \bigoplus_{p \in \Vcal(X)} e_p \Dcal(X_p)$ 
   by Dahmen and Micchelli \cite{dahmen-micchelli-1985b} (see also \cite[Theorem 49]{BoxSplineBook} and \cite[(16.1)]{concini-procesi-book})
   and dually,  the decomposition of the periodic $\Pcal$-spaces by the second author \cite{lenz-arithmetic-2016}.
   These decompositions could be a step towards a bijective proof of our result.
 \end{Remark}

% \smallskip
 For two multiplicity functions $m_1,m_2 : 2^M \to R$, we will consider their product $m_1m_2$, defined by 
 $(m_1m_2)(A):= m_1(A)m_2(A)$.
 The following generalization of our main theorem was suggested to us by Luca Moci.
 It can be proven in a similar way.
 A complete proof will appear in a future article. 
 \begin{Theorem}
 \label{Theorem:GeneralisedMainTheorem}
   Let $(M,\rank,m_1)$ and $(M,\rank,m_2)$ be two ranked sets with multiplicity.
   Then $(M,\rank,m_1  m_2)$ is a ranked set with multiplicity
   and its arithmetic Tutte polynomial is given by the convolution formula
   \begin{equation}
    \aritutte_{(M,\rank,m_1  m_2)}(x,y) 
      = \sum_{A\subseteq M}  \aritutte_{ (M,\rank,m_1)   |_A}(0, y) \aritutte_{(M,\rank,m_2)   /A}(x,0).
   \end{equation}
 \end{Theorem}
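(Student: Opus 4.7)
The plan is to prove Theorem~\ref{Theorem:GeneralisedMainTheorem} by a direct computation that mirrors the proof of Theorem~\ref{Theorem:ArithmeticConvolution}. First, I would note that $(M,\rank,m_1m_2)$ is trivially a ranked set with multiplicities, since the axioms for such a structure only require $\rank(\emptyset)=0$ and $m_1 m_2 : 2^M \to R$, with no additional conditions tying $m$ to $\rank$. Hence only the identity itself needs proof.

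Next, I would expand the right-hand side using the definition of the arithmetic Tutte function. Writing $r = \rank(M)$, the factor $\aritutte_{(M,\rank,m_1)|_A}(0,y)$ unfolds as a sum over $B \subseteq A$ of $m_1(B)(-1)^{\rank(A)-\rank(B)}(y-1)^{|B|-\rank(B)}$, and $\aritutte_{(M,\rank,m_2)/A}(x,0)$ unfolds as a sum over $C \subseteq M\setminus A$ of $m_2(C\cup A)(x-1)^{r-\rank(C\cup A)}(-1)^{|C|+\rank(A)-\rank(C\cup A)}$. Making the change of variables $D := A \cup C$, the RHS becomes a triple sum indexed by triples $B \subseteq A \subseteq D \subseteq M$, of
\begin{equation*}
m_1(B)\,m_2(D)\,(y-1)^{|B|-\rank(B)}\,(x-1)^{r-\rank(D)}\,(-1)^{\rank(B)+\rank(D)+|D|-|A|}
\end{equation*}
after reducing the sign exponent modulo $2$.

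The key step is then to isolate the inner sum over $A$ with $B\subseteq A\subseteq D$ fixed, which contributes $\sum_{A' \subseteq D\setminus B} (-1)^{|D\setminus B|-|A'|}$ after translating $A = B \cup A'$. By the standard inclusion--exclusion identity, this sum is $1$ if $B=D$ and $0$ otherwise. Substituting back collapses the triple sum to a single sum over $B \subseteq M$, and since $(-1)^{2\rank(B)}=1$, one obtains
\begin{equation*}
\sum_{B\subseteq M}(m_1 m_2)(B)\,(x-1)^{r-\rank(B)}\,(y-1)^{|B|-\rank(B)} = \aritutte_{(M,\rank,m_1 m_2)}(x,y).
\end{equation*}

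The only real obstacle is bookkeeping: one must carefully track the four separate sign contributions and the interaction of $\rank(A)$ terms that ultimately cancel, along with the change of summation variables from $(A,B,C)$ with $C\subseteq M\setminus A$ to $(B,A,D)$ with $B\subseteq A \subseteq D$. Once this is done, the proof of the original convolution formula (as well as the specializations $m_1\equiv 1$ and $m_2 \equiv 1$ recovering the two identities in Theorem~\ref{Theorem:ArithmeticConvolution}) drops out without any appeal to properties of the rank function beyond $\rank(\emptyset)=0$.
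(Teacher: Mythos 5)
Your proof is correct: the expansion of both factors, the change of variables to chains $B\subseteq A\subseteq D$, the sign bookkeeping modulo $2$, and the collapse of the inner sum via $\sum_{A'\subseteq D\setminus B}(-1)^{\abs{D\setminus B}-\abs{A'}}=\delta_{B,D}$ all check out, and the observation that $(M,\rank,m_1m_2)$ is automatically a ranked set with multiplicities is right since the definition imposes no compatibility between $m$ and $\rank$. The route is, however, genuinely different from the paper's. The paper does not actually print a proof of this theorem (it defers the complete argument to a future article), but states that it is proven ``in a similar way'' to Theorem~\ref{Theorem:ArithmeticConvolution}, whose proof works abstractly in the convolution algebra of functions on isomorphism classes of ranked sets with multiplicities: one writes $\aritutte_{(M,\rank,m_1)}(0,y+1)=(\xi(1,y)\circ\zeta(-1,1))(M)$ and $\aritutte_{(M,\rank,m_2)}(x+1,0)=(\zeta(1,-1)\circ\xi^*(x,1))(M)$, then invokes associativity (Lemma~\ref{Lemma:associativity}) and the inverse formula $\zeta(x,y)^{-1}=\zeta(-x,-y)$ (Lemma~\ref{Lemma:zetaInverse}) to cancel the middle factors, leaving $\xi(1,y)\circ\xi^*(x,1)$, which a short computation (the two-multiplicity analogue of Lemma~\ref{Lemma:AritutteConvolution}) identifies with $\aritutte_{(M,\rank,m_1m_2)}(x+1,y+1)$. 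Your inner inclusion--exclusion sum is precisely the unrolled combinatorial content of $\zeta(-1,1)\circ\zeta(1,-1)=\delta$, so the two arguments are ultimately the same cancellation; the algebraic version buys modularity and reusability (the same three lemmas yield Theorem~\ref{Theorem:ArithmeticConvolution} and its variants with no new bookkeeping), while your direct expansion buys self-containedness at the cost of the sign-tracking you acknowledge.
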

 Theorem~\ref{Theorem:GeneralisedMainTheorem}  implies a generalized version of the key lemma  (Lemma~2) of \cite{delucchi-moci-2016}.
\begin{Corollary}[Positivity of products of multiplicity functions]
\label{Corollary:PforProduct}
 Let  $(M, \rank)$ be a matroid
 and let  $m_1, m_2 : 2^E \to \R$ be two functions.

 If both $m_1$ and $m_2$ satisfy the positivity axiom (cf.~\eqref{eq:Paxiom}), so does their product $m_1m_2$.
\end{Corollary}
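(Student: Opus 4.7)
The plan is to reduce the positivity of $m_1 m_2$ to the positivity of $m_1$ and $m_2$ by evaluating the convolution formula in Theorem~\ref{Theorem:GeneralisedMainTheorem} at the point $(x,y) = (0,0)$. The key preliminary observation I would establish is that a multiplicity function $m$ on $(M,\rank)$ satisfies the positivity axiom~\eqref{eq:Paxiom} if and only if $\aritutte_{(M|_B/A,\, \rank,\, m)}(0,0) \geq 0$ for every pair $A \subseteq B \subseteq M$. This is a direct computation: substituting $x=y=0$ into the definition of $\aritutte$ for the minor $M|_B/A$ produces exactly the rank-signed M\"obius-type sum $(-1)^{\rank(B)-\rank(A)}\sum_{A \subseteq X \subseteq B}(-1)^{\abs{X \setminus A}} m(X)$ that appears in~\eqref{eq:Paxiom}.

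Next I would fix an arbitrary pair $A \subseteq B \subseteq M$, set $N := M|_B/A$, and apply Theorem~\ref{Theorem:GeneralisedMainTheorem} to the two ranked sets with multiplicity $(N, \rank_N, m_1)$ and $(N, \rank_N, m_2)$. At $(x,y)=(0,0)$ the formula becomes
\begin{equation*}
\aritutte_{(N,\, m_1 m_2)}(0,0) = \sum_{C \subseteq B \setminus A} \aritutte_{(N|_C,\, m_1)}(0,0) \cdot \aritutte_{(N/C,\, m_2)}(0,0).
\end{equation*}
A short bookkeeping check identifies $N|_C = M|_{A \cup C}/A$ and $N/C = M|_B/(A \cup C)$, with multiplicities inherited from the respective ambient functions. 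By the preliminary observation, the first factor is non-negative because $m_1$ satisfies~\eqref{eq:Paxiom} on the interval $[A, A \cup C]$ of $M$, and the second is non-negative because $m_2$ satisfies it on $[A \cup C, B]$; hence every summand is a product of two non-negative reals.

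The sum is therefore non-negative, and applying the preliminary observation in reverse yields the positivity condition for $m_1 m_2$ at the pair $(A,B)$; since $(A,B)$ was arbitrary, $m_1 m_2$ satisfies~\eqref{eq:Paxiom}. The only step that needs real care is the sign bookkeeping in the preliminary observation, matching the rank and cardinality signs that fall out of $\aritutte_{(M|_B/A)}(0,0)$ with the sign convention of~\eqref{eq:Paxiom}; once that is pinned down, the rest of the argument is an immediate consequence of Theorem~\ref{Theorem:GeneralisedMainTheorem}.
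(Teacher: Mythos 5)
Your overall architecture coincides with the paper's: evaluate the product convolution formula of Theorem~\ref{Theorem:GeneralisedMainTheorem} at $(0,0)$ on the minor $M|_S/R$, and observe that each summand is a product of constant terms of arithmetic Tutte polynomials of minors carrying $m_1$ and $m_2$ respectively. There is, however, a genuine gap in your ``preliminary observation''. The direct computation you describe establishes only the identity $\aritutte_{(M|_B/A,\,\rank,\,m)}(0,0)=(-1)^{\rank(B)-\rank(A)}\sum_{A\subseteq X\subseteq B}(-1)^{\abs{X\setminus A}}m(X)$, and with it only the easy direction of your claimed equivalence: if these quantities are nonnegative for all pairs, then in particular for molecules, where this expression equals $\rho(A,B)$, so \eqref{eq:Paxiom} holds. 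The direction you actually invoke for the factors --- that \eqref{eq:Paxiom} forces $\aritutte_{(M|_B/A,\,m)}(0,0)\ge 0$ for \emph{every} pair $A\subseteq B$ --- is not a direct computation. Axiom \eqref{eq:Paxiom} constrains only molecules, and the intervals $[A,A\cup C]$ and $[A\cup C,B]$ arising in your sum are in general not molecules, so the justification ``$m_1$ satisfies \eqref{eq:Paxiom} on the interval $[A,A\cup C]$'' does not apply as stated. The real issue is not sign bookkeeping but this logical direction.

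To close the gap you need the two ingredients the paper supplies at exactly this point: (i) the positivity axiom is closed under minors (for contractions because $[R,S]$ is a molecule of $M/e$ if and only if $[R\cup\{e\},S\cup\{e\}]$ is a molecule of $M$); and (ii) $\aritutte_N(0,0)\ge 0$ for every ranked set with multiplicities $N$ satisfying (P), which is the constant-term case of Corollary~\ref{Corollary:PositivityOfCoefficients} and is proved by partitioning $2^N$ into molecules $[R,S]$ with $R$ independent and $\rank(S)=\rank(N)$, following \cite[Proposition~4.4]{branden-moci-2014}. With (i) and (ii) in hand, each factor in your sum is the constant term of the arithmetic Tutte polynomial of a minor of a pseudo-arithmetic matroid and is therefore nonnegative, and your argument becomes the paper's proof verbatim. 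A minor simplification: it suffices to run the argument for molecules $[R,S]$ only, since at the final step you need only the trivial direction of your equivalence.
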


\begin{Remark}
    Delucchi and Moci \cite{delucchi-moci-2016} 
    remarked that Corollary~\ref{Corollary:PforProduct} implies that
    if both $(E, \rank,m_1)$ and $(E, \rank,m_2)$ are arithmetic matroids, then 
    $(E, \rank,m_1 m_2)$ is an arithmetic matroid as well. They used this to answer a question of Bajo--Burdick--Chmutov
    on cellular matroids of CW complexes \cite{bajo-burdick-chmutov-2014}.

Note that $(E, \rank,m_1 m_2)$ is not necessarily representable, even if both $(E, \rank,m_1)$ are representable.
As an example, consider the arithmetic matroid $(E,\rank, m)$ represented by the list of vectors
 $X = ((1,0),(0,1),(1,1),(1,-1))$ and the  arithmetic matroid $(E,\rank, m^2)$. 
 The underlying matroid is uniform in both cases. 
 Suppose there is a list of vectors $X'$ that represents $(E,\rank, m^2)$.
 Since $m^2$ is equal to one on five of the six bases, one can assume without loss of generality that two of the vectors in $X'$
 are $(1,0)$ and $(0,1)$. Then it follows that the other two are of the form $(\pm 1, \pm 1)$. This implies that  
 all bases have multiplicity one or two, which is a contraction.
 Questions of this type are discussed in more detail in \cite{lenz-ppcram-2017}.
\end{Remark}

\subsection*{Zonotopes}
It is easy to see that the number of integer points in a polytope is equal to the sum of the number of integer points in the 
interior of all of its faces.
In the case of zonotopes, this statement is equivalent to the specialization
 of Theorem~\ref{Theorem:ArithmeticConvolution} to $(x,y)=(2,1)$.
\begin{Corollary}
\label{Corollary:TwoOne}
Let $X=(x_1,\ldots, x_N)\subseteq \Z^d$ be a list of  vectors and
let $Z(X):=\{ \sum_{i=1}^N \lambda_i x_i : 0\le \lambda_i \le 1\}$ be the
zonotope defined by $X$.
Then
\begin{align}
  \abs{Z(X)\cap \Z^d} = \aritutte(2,1) &= \sum_{A\subseteq X} \aritutte_{M|_A}(0,1)
  \tutte_{M/A}(2,0) 
  \\
  & =
   \sum_{X \supseteq A \text{ flat}} \aritutte_{M|_A}(0,1)  \tutte_{M/A}(2,0) 
  = \sum_{ F } \abs{ \relint(F)\cap \Z^d },
  \notag
\end{align}
where the last sum is over all faces of $Z(X)$.
\end{Corollary}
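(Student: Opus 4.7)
The plan is to verify the chain of four equalities in Corollary~\ref{Corollary:TwoOne}. The first, $\abs{Z(X) \cap \Z^d} = \aritutte_X(2, 1)$, is Stanley's classical lattice point formula for zonotopes \cite{stanley-1991}. The second equality is just the specialization of Theorem~\ref{Theorem:ArithmeticConvolution} to $(x, y) = (2, 1)$. For the restriction to flats in the third equality, I would observe that if $A \subseteq X$ is not a flat of the underlying matroid $M$, then some $e \in X \setminus A$ satisfies $\rank(A \cup \{e\}) = \rank(A)$, so $e$ is a loop of the contraction $M/A$; since the Tutte polynomial of any matroid with a loop factors as $y$ times the Tutte polynomial of the deletion, we have $\tutte_{M/A}(2, 0) = 0$ and those terms drop out of the sum.

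The final equality is the geometric step. Any polytope decomposes as a disjoint union of the relative interiors of its faces, so trivially $\abs{Z(X) \cap \Z^d} = \sum_F \abs{\relint(F) \cap \Z^d}$, and combined with the preceding three equalities this closes the chain. To make the correspondence term-wise, and thereby reveal the geometric content of the specialization at $(2, 1)$, I would parametrize the faces of $Z(X)$ by pairs $(A, c)$ with $A$ a flat of $M$ and $c$ a chamber of the hyperplane arrangement defined by the image of $X \setminus A$ in $\spa(A)^\perp$. Under this parametrization, each face is an integer translate of $Z(X|_A)$ and hence has $\abs{\relint Z(X|_A) \cap \Z^d}$ interior lattice points; the number of chambers of $X/A$ equals $\tutte_{M/A}(2, 0)$ by Zaslavsky's theorem; and the interior lattice count itself equals $\aritutte_{M|_A}(0, 1)$.

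The main obstacle lies in that term-wise matching. One needs the (flat, chamber) parametrization of the face lattice, which follows from the classical identity $F_\ell = Z(X_\ell^0) + \sum_{\langle \ell, x_i\rangle > 0} x_i$ for the face supporting a generic linear functional $\ell$ (where $X_\ell^0 = \{x_i : \langle \ell, x_i\rangle = 0\}$ is automatically a flat), and the interior lattice formula $\abs{\relint Z(X|_A) \cap \Z^d} = \aritutte_{M|_A}(0, 1)$, which is obtained from Stanley's formula via Ehrhart--Macdonald reciprocity. Neither is strictly required for the bare chain of equalities in the corollary, but both make transparent the assertion that the $(2, 1)$-specialization of Theorem~\ref{Theorem:ArithmeticConvolution} encodes exactly the face decomposition of the zonotope $Z(X)$.
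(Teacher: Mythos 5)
Your proposal is correct and follows essentially the same route as the paper: Stanley's formula for the first equality, the specialization of Theorem~\ref{Theorem:ArithmeticConvolution} for the second, the loop-in-the-contraction argument for the restriction to flats, and the identification of the faces over a flat $A$ with the regions of the contracted arrangement (counted by $\tutte_{M/A}(2,0)$ via Zaslavsky) together with $\abs{\relint Z(X|_A)\cap \Z^d}=\aritutte_{M|_A}(0,1)$ for the last. Your additional observation that the bare chain already closes because both ends equal $\abs{Z(X)\cap\Z^d}$ is a valid shortcut, but the term-wise matching you then supply is exactly the paper's argument.
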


 Barvinok and Pommersheim proved a geometric convolution-like formula 
 for the number of integer points in a lattice zonotope. It would be interesting to find a connection with our convolution formula.
 \begin{Theorem}[{\cite[Section~7]{barvinok-pommersheim-1999}}]
      Let $X\subseteq \Z^d$ be a  list of $N$ vectors. Then
      \begin{equation}
       \abs{ Z(X) \cap \Z^d } =  \sum_F \vol(F) \gamma(P,F),  
      \end{equation}
 where the sum is over all faces $F$ of the zonotope  and $\gamma(P,F)$ 
 denotes the exterior angle of $F$ at $P$.
 The volume of a face is measured intrinsically with respect to the lattice. 
 
   More specifically, the $k$th coefficient of the Ehrhart polynomial  
   $E_X(q) = q^N \aritutte_X(1 + \frac 1q,1)$ of the zonotope
   is equal to  $ \sum_{F: \dim F=k}  \vol(F) \gamma(P,F) $.
 \end{Theorem}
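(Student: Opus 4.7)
The plan is to match both sides with Stanley's classical formula for the Ehrhart polynomial of a lattice zonotope,
\[
E_X(q) = \sum_{A \subseteq X \text{ independent}} m(A)\, q^{\abs{A}},
\]
where $m(A)$ denotes the index of $\Z A$ inside $\Z^d \cap \spa(A)$, i.e.\ the intrinsic lattice volume of the half-open parallelepiped spanned by $A$. It then suffices to show that for each $k$,
\[
\sum_{\substack{A \subseteq X \text{ indep.}\\ \abs{A}=k}} m(A) \;=\; \sum_{\substack{F \text{ face of } Z(X)\\ \dim F=k}} \vol(F)\,\gamma(Z(X),F).
\]

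First I would group the faces of $Z(X)$ by direction. Each face $F$ is a translate of the subzonotope $Z(X_\Fcal)$, where $X_\Fcal := X \cap \spa(F)$ is a flat of the underlying matroid $M(X)$. Applying Stanley's formula to this sublist in its ambient dimension $k = \rank(\Fcal)$ gives
\[
\vol(F) \;=\; \sum_{B \text{ basis of } M|_\Fcal} m(B).
\]
Next I would exploit the fact that the normal fan of $Z(X)$ is the central arrangement $\{v^\perp : v \in X\}$ in $\R^d$. The faces of dimension $k$ parallel to a fixed flat $\Fcal$ correspond bijectively to the top-dimensional regions of the induced arrangement $\{v^\perp \cap \spa(\Fcal)^\perp : v \in X \setminus \Fcal\}$ inside the $(d-k)$-dimensional space $\spa(\Fcal)^\perp$; since these regions partition $\spa(\Fcal)^\perp$, their normalised intrinsic solid angles sum to one.

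Combining these observations, the right-hand coefficient of $q^k$ collapses to $\sum_\Fcal \vol(Z(X_\Fcal))$, with $\Fcal$ ranging over rank-$k$ flats. Expanding each term as above yields $\sum_{A \text{ indep},\, \abs{A}=k} m(A)$, since every size-$k$ independent set is a basis of a unique rank-$k$ flat, namely its matroid closure. The main obstacle is the normal-fan step: one must carefully identify the exterior angle $\gamma(Z(X),F)$ — initially a normalised measure on a sphere in $\R^d$ — with the intrinsic solid angle of the corresponding region in $\spa(\Fcal)^\perp$, and verify that these regions genuinely tile the perpendicular complement. Once this bookkeeping is settled, the remainder is Stanley's formula applied to each restriction $M|_\Fcal$.
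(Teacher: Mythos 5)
The paper does not actually prove this statement: it is quoted verbatim from Barvinok--Pommersheim \cite[Section~7]{barvinok-pommersheim-1999} as motivation (the authors explicitly say it ``would be interesting'' to connect it to their convolution formula), so there is no in-paper proof to compare against. Your blind argument is, however, essentially correct and self-contained. The chain Stanley's formula $E_X(q)=\sum_{A\ \mathrm{indep}} m(A)q^{\abs{A}}$, the volume formula $\vol\bigl(Z(X_\Fcal)\bigr)=\sum_{B}m(B)$ over bases of $M|_\Fcal$, the bijection between $k$-faces of direction $\Fcal$ and chambers of the induced arrangement in $\spa(\Fcal)^\perp$ (equivalently, the full-dimensional cones of the normal fan lying in $\spa(\Fcal)^\perp$), and the fact that these chambers tile $\spa(\Fcal)^\perp$ so their normalized solid angles sum to $1$, together give exactly $\sum_{F:\dim F=k}\vol(F)\gamma(P,F)=\sum_{A\ \mathrm{indep},\ \abs{A}=k}m(A)$; the closure map gives the bijection between size-$k$ independent sets and (flat, basis) pairs. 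The step you flag as the main obstacle is indeed the only delicate point, and your resolution is the standard one: $\gamma(P,F)$ is by definition the fraction of the unit sphere in $\mathrm{aff}(F)^\perp=\spa(\Fcal)^\perp$ occupied by the normal cone $N(F,P)$, which is precisely the closed chamber in question. Two small corrections: you should write $X_\Fcal=X\cap\spa(F-F)$ (the direction space of $F$), not $X\cap\spa(F)$; and note that the displayed identity $E_X(q)=q^N\aritutte_X(1+\tfrac1q,1)$ in the statement only reduces to Stanley's formula when the exponent is $\rank(X)$ rather than $N$ (this appears to be a typo in the statement, since $\aritutte_X(1+\tfrac1q,1)=\sum_{A\ \mathrm{indep}}m(A)q^{\abs{A}-\rank(X)}$), so you are implicitly proving the corrected version.
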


 \subsection*{Flows and colorings}
 
 In this subsection we will give a combinatorial interpretation of the evaluation of the arithmetic Tutte polynomial
 and a closely related polynomial,  the modified Tutte--Krushkal--Renardy polynomial, at infinitely many integer values 
 in terms of arithmetic flows and colorings.  This works for arbitrary representable arithmetic matroids.

 D'Adderio and Moci defined a class of ``graphic arithmetic matroids'' using graphs whose edges are labeled by positive integers
 \cite{moci-adderio-flow-2013}.
 One can define so-called arithmetic flows and arithmetic colorings on these graphs.  These notions of flows and
 colorings were extended by Br\"and\'en and Moci to the setting where $X$ is a finite list of elements from a finitely generated abelian group \cite{branden-moci-2014}.
 These arithmetic flows and colorings are related to our convolution formula in a similar way as
 classical flows and colorings are related to the classical convolution formula \cite[Theorem~2]{kook-reiner-stanton-1999}.
 Arithmetic flows and colorings contain 
 flows and colorings of CW complexes \cite{beck-breuer-godkin-martin-2014,beck-kemper-2012}
 as a special case, 
 when the list of vectors is 
 taken to be a boundary operator of CW complexes  
 \cite[Lemma~4]{delucchi-moci-2016}.

We briefly review the setup of Br\"and\'en and Moci. 
Let $G$ be a finitely generated abelian group.  
Let $X$ 
be a finite list (or sequence) of elements of $G$. %
We call $\phi \in \Hom(G, \mathbb{Z}_q)$ a \emph{proper arithmetic $q$-coloring} if $\phi(x) \neq 0$ for all $x \in X$. 
We denote the number of proper arithmetic $q$-colorings of $X$ by $\chi_{X}(q)$.

A \emph{nowhere zero $q$-flow} on $X$ 
is a function $\psi : X \to \Z_q \setminus \{0\}$ \st
 $\sum_{x \in X} \psi(x) x = 0$ in $G/qG$.
 We denote the number of such functions by $\chi^*_{X}(q)$.

For $B\subseteq X$, let $G_B$ denote the torsion subgroup of the quotient
$G / \left\langle \{ x : x\in B \} \right\rangle$ and let $m(B):=\abs{G_B}$.

 Let $\lcm(X):=\lcm\{ m(B) : B \subseteq X \text{ basis}\}$.
 We define the following two subsets of the set of   positive integers:
\begin{align}
 \mathbb{Z}_M(X) &:= \{q\in \Z_{>0} : \gcd(q, \lcm(X))=1\} 
 \\
 \text{and }
 \mathbb{Z}_A(X) &:= \{q\in \Z_{>0} : q G_B = \{0\} \text{ for all bases } B \subseteq X \}.
\end{align}
Given a list of vectors $X$ with associated arithmetic matroid $(M,\rank,m)$ we let $\aritutte_X(x,y)$ 
denote the arithmetic Tutte polynomial $\aritutte_{(M,\rank,m)}(x,y)$. 
Furthermore, we let $\aritutte_{X^2}(x,y)$ denote the arithmetic Tutte polynomial $\aritutte_{(M,\rank,m^2)}(x,y)$. 
We recall that by Corollary~\ref{Corollary:PforProduct} (or by \cite{delucchi-moci-2016}),  $(M,\rank,m^2)$ is indeed an arithmetic matroid.
 The polynomial $\aritutte_{X^2}(x,y)$ has a special significance 
 for arithmetic matroids that arise from CW complexes.
 In this case, the \emph{modified $j$th Tutte--Krushkal--Renardy polynomial}, that was introduced in \cite{bajo-burdick-chmutov-2014}, 
 is equal to
 the arithmetic Tutte polynomial $\aritutte_{X^2}(x,y)$, where $X$ is the list of vectors obtained from the $j$th boundary operator
 \cite[Section~4]{delucchi-moci-2016}.
 In this setting, the modified $j$th Tutte--Krushkal--Renardy 
 polynomial can be recovered from %
 Corollary \ref{Corollary:squared} below.

\begin{Theorem}[Br\"and\'en--Moci, \cite{branden-moci-2014}]
\label{Theorem:BrandenMociFlowsColourings}
Let $G$ and $X$  be as above.
\begin{align}
\text{If $q \in \mathbb{Z}_A(X)$, then 
 } \chi_{X}(q) &= (-1)^{\rank(X)}q^{\rank(G) - \rank(X)} \aritutte_X(1-q,0)
\\
\text{and }
\chi^*_{X}(q) &= (-1)^{\abs{X} - \rank(X)} \aritutte_X(0,1-q).
\\
\text{If $q \in \mathbb{Z}_M(X)$, then 
}
\chi_{X}(q) &= (-1)^{\rank(X)}q^{\rank(G)-\rank(X)} \tutte_X(1-q,0) \\
\text{and } \chi^*_{X}(q) &= (-1)^{\abs{X}-\rank(X)} \tutte_X(0,1-q).
\end{align}
\end{Theorem}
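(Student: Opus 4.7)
My plan is to establish all four formulas by inclusion-exclusion over subsets $A\subseteq X$ and then to match the resulting sums to the defining expansions of $\tutte_X$ and $\aritutte_X$ at the specified points. The coloring and flow cases are handled by parallel arguments; the two sub-cases (whether $q\in\mathbb{Z}_M(X)$ or $q\in\mathbb{Z}_A(X)$) differ only in the torsion bookkeeping.

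For the chromatic formulas, inclusion-exclusion over the bad events $\{\phi(x)=0\}$ yields
\[
 \chi_{X}(q) = \sum_{A\subseteq X} (-1)^{\abs{A}} \abs{\Hom(G/\langle A\rangle, \Z_q)}.
\]
The quotient $G/\langle A\rangle$ decomposes as a free part of rank $\rank(G)-\rank(A)$ and the torsion subgroup $G_A$ of order $m(A)$, so
\[
 \abs{\Hom(G/\langle A\rangle, \Z_q)} = q^{\rank(G)-\rank(A)} \cdot \abs{\Hom(G_A, \Z_q)}.
\]
If $q\in \mathbb{Z}_M(X)$, then every invariant factor of $G_A$ is coprime to $q$, so $\abs{\Hom(G_A,\Z_q)}=1$; if $q\in\mathbb{Z}_A(X)$, then every invariant factor of $G_A$ divides $q$, so $\abs{\Hom(G_A,\Z_q)} = m(A)$. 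Rearranging signs in the expansions
\[
 \tutte_X(1-q,0) = (-1)^{\rank(X)} \sum_{A\subseteq X} (-1)^{\abs{A}} q^{\rank(X)-\rank(A)},
\]
\[
 \aritutte_X(1-q,0) = (-1)^{\rank(X)} \sum_{A\subseteq X} m(A) (-1)^{\abs{A}} q^{\rank(X)-\rank(A)},
\]
and multiplying by the prefactor $(-1)^{\rank(X)} q^{\rank(G)-\rank(X)}$ matches both chromatic identities term by term.

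For the flow formulas, inclusion-exclusion on the events $\{\psi(x)=0\}$ together with the substitution $B := X\setminus A$ gives
\[
 \chi^*_{X}(q) = \sum_{B\subseteq X} (-1)^{\abs{X}-\abs{B}} \, \abs{\ker\bigl(\Z_q^{B} \to G/qG\bigr)},
\]
where the displayed map sends $e_x \mapsto x + qG$. The key computation is to show that this kernel has size $q^{\abs{B}-\rank(B)}$ when $q\in\mathbb{Z}_M(X)$ and $q^{\abs{B}-\rank(B)}\, m(B)$ when $q\in\mathbb{Z}_A(X)$. I would carry this out by putting the integer matrix of $\Z^{B}\to G$, $e_x\mapsto x$, into Smith normal form, which reduces the computation to an explicit product over cyclic factors; the divisibility of $q$ against the invariant factors of $G_B$ is exactly what separates the two regimes. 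Substituting the kernel sizes and comparing with the expansions of $\tutte_X(0,1-q)$ and $\aritutte_X(0,1-q)$, which produce the overall sign $(-1)^{\abs{X}-\rank(X)}$, completes the proof.

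The main obstacle in both halves of the argument is the same piece of torsion bookkeeping: controlling $\abs{\Hom(G_A,\Z_q)}$ and $\abs{\ker(\Z_q^B \to G/qG)}$ for every subset, not only for bases, requires that the invariant factors of $G_A$ divide those of $G_B$ for every basis $B\supseteq A$. This is a genuine structural property of the torsion subgroups associated to a list of vectors in a finitely generated abelian group, and it is what makes the representable arithmetic matroid framework (rather than a bare matroid decorated with a multiplicity function) the correct setting for the theorem.
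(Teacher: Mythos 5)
First, note that the paper itself does not prove Theorem~\ref{Theorem:BrandenMociFlowsColourings}: it is quoted from Br\"and\'en--Moci \cite{branden-moci-2014}, so there is no internal proof to compare your argument against. Your strategy --- inclusion-exclusion over the vanishing sets, followed by a count of $\Hom(G/\langle A\rangle,\Z_q)$, respectively of $\ker(\Z_q^B\to G/qG)$ --- is the standard route. The coloring half is essentially complete once you actually prove the ``structural property'' you defer to your last paragraph; note that as you phrase it (``the invariant factors of $G_A$ divide those of $G_B$ for every basis $B\supseteq A$'') it does not even apply to dependent $A$, which need not lie in any basis. The correct two-step argument: for a maximal independent $I\subseteq A$, the group $G_A$ is a quotient of $G_I$ (both are the saturation of $\langle I\rangle$ modulo $\langle A\rangle$, resp.\ $\langle I\rangle$), and for $I\subseteq B$ with $B$ a basis the induced map $G_I\to G_B$ is injective (independence of $B$ kills the potential kernel). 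Hence the exponent of $G_A$ divides that of some $G_B$, which gives $\abs{\Hom(G_A,\Z_q)}=m(A)$ for $q\in\mathbb{Z}_A(X)$ and $=1$ for $q\in\mathbb{Z}_M(X)$.

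The flow half has a genuine gap: your claimed kernel size ignores the torsion of the ambient group $G$. The Smith normal form computation you propose actually yields
\begin{equation*}
 \abs{\ker\bigl(\Z_q^B\to G/qG\bigr)} \;=\; q^{\abs{B}-\rank(B)}\,\frac{\abs{G_B/qG_B}}{\abs{T/qT}},
\end{equation*}
where $T$ is the torsion subgroup of $G$, so that $\abs{T}=m(\emptyset)$. For $q\in\mathbb{Z}_M(X)$ one has $\abs{T/qT}=1$ (since $T$ embeds into $G_B$ for every basis $B$, so $m(\emptyset)$ divides $m(B)$ and is coprime to $q$), and your formula is correct there. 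But for $q\in\mathbb{Z}_A(X)$ the kernel has order $q^{\abs{B}-\rank(B)}m(B)/m(\emptyset)$, not $q^{\abs{B}-\rank(B)}m(B)$. Concretely, take $G=\Z/2\Z$, $X=(1)$, $q=4\in\mathbb{Z}_A(X)$: there is exactly one nowhere-zero $4$-flow (namely $\psi(1)=2$), while $(-1)^{\abs{X}-\rank(X)}\aritutte_X(0,1-q)=-(2-q)=2$. So either you must assume $G$ torsion-free (equivalently $m(\emptyset)=1$), or the identity for $q\in\mathbb{Z}_A(X)$ must carry a factor $1/m(\emptyset)$; as transcribed here the statement itself needs this caveat, and your proof, once the kernel is computed correctly, will produce the corrected version rather than the displayed one.
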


\begin{Example}
  Let $X=((2,0),(-1,1),(1,1))$.
Then $\lcm(X)=2$,  
         $ \mathbb{Z}_M(X) = \{1,3,5,7,\ldots\} $, and
         $ \mathbb{Z}_A(X) = \{2,4,6,8,\ldots\} $.
	 The polynomials are
	 $\chi_X(q)|_{\mathbb{Z}_A(X)}= q^2 - 4q + 4$,
	 $\chi_X(q)|_{\mathbb{Z}_M(X)}= q^2 - 3q + 2$,
	 $\chi_X^*(q)|_{\mathbb{Z}_A(X)}= 2q-3$, and
	 $\chi_X^*(q)|_{\mathbb{Z}_M(X)}= q-1$.
	 Hence there are two proper arithmetic $3$-colorings ($[1,0]$ and $[2,0]$)
	 and two nowhere zero  $3$-flows ($[1,1,2]$ and $[2,2,1]$).
\end{Example}

Let $A\subseteq X$.
We denote the sublist of $X$ that is indexed by $A$ by $X|_A$ (restriction) 
and the projection of $X|_{X\setminus A}$ to $G/A := G/\langle \{x : x\in A\}\rangle$ by $X/A$ (contraction).
 
 \begin{Corollary}
 \label{Corollary:squared}
  Let $G$ and $X$ be as above and $p,q  \in \mathbb{Z}_A(X)$. Then  
\begin{equation}
     \aritutte_{X^2}(1-p,1-q) =
  p^{\rank(G)-\rank(X)}(-1)^{\rank(X)}\sum_{ A\subseteq X}(-1)^{|A|}\chi^*_{X|_A}(q) \chi_{X/A}(p).
\end{equation}
 \end{Corollary}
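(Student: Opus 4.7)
The plan is to apply Theorem~\ref{Theorem:GeneralisedMainTheorem} with $m_1=m_2=m$ and then to translate each factor of the resulting sum via the Br\"and\'en--Moci formulas of Theorem~\ref{Theorem:BrandenMociFlowsColourings}. By Corollary~\ref{Corollary:PforProduct}, the triple $(M,\rank,m^2)$ is again an arithmetic matroid, so Theorem~\ref{Theorem:GeneralisedMainTheorem} gives
\begin{equation*}
\aritutte_{X^2}(1-p,1-q) \;=\; \sum_{A\subseteq X} \aritutte_{X|_A}(0,1-q)\cdot \aritutte_{X/A}(1-p,0).
\end{equation*}

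Both sides of the corollary are polynomials in $p$ and $q$, so it suffices to verify the identity for $p$ and $q$ drawn from an infinite set. I would take $p$ and $q$ to be positive integer multiples of
$L := \lcm\{|G_B| : B \text{ a basis of some } X|_A \text{ or } X/A,\; A\subseteq X\}$, so that Theorem~\ref{Theorem:BrandenMociFlowsColourings} applies simultaneously to every restriction and contraction appearing in the sum. Inverting the two identities in that theorem then yields $\aritutte_{X|_A}(0,1-q) = (-1)^{|A|-\rank(A)}\,\chi^*_{X|_A}(q)$ and $\aritutte_{X/A}(1-p,0) = (-1)^{\rank(X)-\rank(A)}\, p^{\rank(X)-\rank(G)}\,\chi_{X/A}(p)$, where I use $\rank(X/A)=\rank(X)-\rank(A)$ and $\rank(G/A)=\rank(G)-\rank(A)$ to eliminate the dependence of the exponent of $p$ on $A$. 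Substituting into the convolution above, the signs in each summand collapse to $(-1)^{|A|+\rank(X)}$ (since $(-1)^{-2\rank(A)}=1$) and the common power of $p$ factors out of the sum, yielding the stated identity.

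The main obstacle is ensuring that Theorem~\ref{Theorem:BrandenMociFlowsColourings} can be applied to every restriction $X|_A$ and contraction $X/A$ rather than only to $X$ itself. The polynomial-density argument above sidesteps a direct comparison of the sets $\Z_A(X)$, $\Z_A(X|_A)$, and $\Z_A(X/A)$, which would otherwise require a careful analysis of how the torsion subgroups $G_B$ propagate under restriction and contraction. Once that point is resolved, the rest of the argument is routine bookkeeping of signs and of a single power of $p$.
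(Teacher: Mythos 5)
Your overall route is the same as the paper's: apply Theorem~\ref{Theorem:GeneralisedMainTheorem} with $m_1=m_2=m$, rewrite each factor via Theorem~\ref{Theorem:BrandenMociFlowsColourings}, and collect signs and the power of $p$ using $\rank(X/A)=\rank(X)-\rank(A)$ and $\rank(G/A)=\rank(G)-\rank(A)$. Two points, however, need attention.

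First, the polynomial-density argument does not do the work you want it to. The left-hand side is a polynomial in $p$ and $q$, but the right-hand side is only a quasi-polynomial: each $\chi^*_{X|_A}$ and $\chi_{X/A}$ agrees with a polynomial only on $\Z_A(X|_A)$, respectively $\Z_A(X/A)$. Verifying the identity on multiples of your $L$ therefore proves it only there; to upgrade it to all $p,q\in\Z_A(X)$ you must know that the counting functions are given by those same polynomials on all of $\Z_A(X)$, which is precisely the containments $\Z_A(X)\subseteq\Z_A(X|_A)$ and $\Z_A(X)\subseteq\Z_A(X/A)$ you were trying to avoid. Fortunately these hold (and the paper uses them tacitly): a maximal independent subset $B'$ of $A$ extends to a basis $B$ of $X$, and $G_{B'}$ embeds into $G_B$ because $\langle B\rangle/\langle B'\rangle$ is free; dually, the torsion groups attached to bases of $X/A$ are quotients of groups $G_B$ for bases $B$ of $X$. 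The right fix is to prove these containments, not to invoke density.

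Second, your (correct) inversion $\aritutte_{X/A}(1-p,0)=(-1)^{\rank(X/A)}p^{\rank(X/A)-\rank(G/A)}\chi_{X/A}(p)$ produces the common factor $p^{\rank(X)-\rank(G)}$, the \emph{reciprocal} of the prefactor $p^{\rank(G)-\rank(X)}$ in the statement, so your computation does not in fact ``yield the stated identity'' but the identity with the exponent negated. (The paper's own proof writes $p^{\rank(G/A)-\rank(X/A)}$ at the corresponding step, which is not the inverse of the Br\"and\'en--Moci formula.) A rank-deficient example such as $X=((2,0))$ in $G=\Z^2$, where $\aritutte_{X^2}(1-p,1-q)=4-p$ while the printed right-hand side evaluates to $p^2(4-p)$, confirms that the correct exponent is $\rank(X)-\rank(G)$. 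You should either flag this discrepancy or prove the corrected version; asserting that your factor matches the printed one papers over a real sign error in the exponent.
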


 \begin{Corollary}
 \label{Corollary:FlowsColourings}
  Let $G$ and $X$  be as above, $p \in \mathbb{Z}_A(X)$ and $q \in \mathbb{Z}_M(X)$ then  
\begin{equation}
\aritutte_{X}(1-p,1-q)
   = p^{\rank(G)-\rank(X)}(-1)^{\rank(X)}
   \sum_{A\subseteq X}(-1)^{|A|}\chi^*_{X|_A}(q) \chi_{X/A}(p).
\end{equation}
The same statement holds if we instead take  $p \in \mathbb{Z}_M(X)$ and $q \in \mathbb{Z}_A(X)$.
 \end{Corollary}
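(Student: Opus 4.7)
The plan is to derive the corollary as a direct consequence of Theorem~\ref{Theorem:ArithmeticConvolution} together with the Br\"and\'en--Moci evaluations (Theorem~\ref{Theorem:BrandenMociFlowsColourings}). First I would specialize the second form of the convolution identity to the point $(x,y)=(1-p,1-q)$:
\begin{equation*}
\aritutte_X(1-p,1-q) = \sum_{A\subseteq X} \tutte_{X|_A}(0,1-q)\,\aritutte_{X/A}(1-p,0).
\end{equation*}
This reduces the problem to rewriting the two factors in each summand.

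The factor $\tutte_{X|_A}(0,1-q)$ is a (non-arithmetic) Tutte evaluation, and since $q\in\mathbb{Z}_M(X)$ descends to $q\in\mathbb{Z}_M(X|_A)$, the flow formula of Theorem~\ref{Theorem:BrandenMociFlowsColourings} gives
\begin{equation*}
\tutte_{X|_A}(0,1-q) = (-1)^{|A|-\rank(A)}\chi^*_{X|_A}(q).
\end{equation*}
The factor $\aritutte_{X/A}(1-p,0)$ is an arithmetic Tutte evaluation for the contraction, viewed as a list of vectors in the ambient group $G/\langle A\rangle$; since $p\in\mathbb{Z}_A(X)$ descends to $p\in\mathbb{Z}_A(X/A)$, the coloring formula of Theorem~\ref{Theorem:BrandenMociFlowsColourings} applies and produces $\chi_{X/A}(p)$ together with an overall sign $(-1)^{\rank(X)-\rank(A)}$ and a power of $p$. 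Because the difference $\rank(G/A)-\rank(X/A)$ simplifies to $\rank(G)-\rank(X)$ independently of $A$, this power of $p$ is the same for every term and factors out of the sum.

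Substituting the two expressions and collecting signs, the contributions $(-1)^{|A|-\rank(A)}$ and $(-1)^{\rank(X)-\rank(A)}$ combine (the $-2\rank(A)$ contributes $+1$) to yield $(-1)^{\rank(X)}(-1)^{|A|}$, producing exactly the stated identity. The second version of the corollary, obtained by swapping the roles of $p$ and $q$, follows by the analogous argument starting from the first form of the convolution identity in Theorem~\ref{Theorem:ArithmeticConvolution}, which swaps which of the two factors carries the arithmetic information.

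The main obstacle is verifying the descent of the hypotheses, namely that $q\in\mathbb{Z}_M(X|_A)$ and $p\in\mathbb{Z}_A(X/A)$ for every $A\subseteq X$. Both follow from standard minor-compatibility facts: every basis of $X|_A$ extends to a basis of $X$, so the multiplicity of a basis of $X|_A$ divides $\lcm(X)$; and every basis of $X/A$ augmented by a basis of $A$ is a basis of $X$, so the torsion subgroup of $(G/\langle A\rangle)/\langle B\rangle$ appearing in the definition of $\mathbb{Z}_A(X/A)$ is a quotient of the corresponding torsion subgroup of $G/\langle A\cup B\rangle$ used in $\mathbb{Z}_A(X)$. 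Once these are in hand, the remainder is routine substitution and sign bookkeeping.
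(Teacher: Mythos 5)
Your proposal is correct and follows essentially the same route as the paper: specialize the appropriate form of Theorem~\ref{Theorem:ArithmeticConvolution} at $(1-p,1-q)$, rewrite each factor via Theorem~\ref{Theorem:BrandenMociFlowsColourings}, and use $\rank(G/A)-\rank(X/A)=\rank(G)-\rank(X)$ plus the sign identity $(-1)^{|A|-\rank(A)}(-1)^{\rank(X)-\rank(A)}=(-1)^{|A|}(-1)^{\rank(X)}$. Your explicit verification that the hypotheses $q\in\mathbb{Z}_M(X)$ and $p\in\mathbb{Z}_A(X)$ descend to the minors $X|_A$ and $X/A$ is a detail the paper leaves implicit, and it is a welcome addition.
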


\begin{Remark}
Suppose that the list $X$ in Corollary \ref{Corollary:FlowsColourings} is the quotient of a scaled unimodular list, \ie it satisfies the following conditions:
\begin{compactenum}
 \item There is a list $X_0 = (x_1,\ldots, x_N) \subseteq \Z^d$ (for some $d,N\in \N$) and $A_0\subseteq X_0$  \st
  $X=X_0/A_0$.
  \item There is a sequence of integers $ ( b_1,\ldots, b_N)$ \st
  the scaled list $ \tilde X_0 := (\frac{1}{b_1} x_1,\ldots, \frac{1}{b_N} x_N)$ is integral and totally unimodular.
\end{compactenum}
 Let $\tilde A_0$ be the subset of $\tilde X_0$ that corresponds to $A_0\subseteq X_0$ and
let $\tilde X:=\tilde X_0/\tilde A_0$. Then 
 $\aritutte_{\tilde X}(x,y) = \tutte_{X}(x,y).$  
 Note that due to total unimodularity, $\tilde X$ is contained in a free abelian group.

Therefore, 
we can interpret the arithmetic Tutte polynomial $\aritutte_X$ in terms of classical flows and arithmetic colorings, or vice versa.
More specifically, 
in the previous corollary we can obtain 
\begin{equation}
\aritutte_{X}(1-p,1-q) = 
p^{\rank(G)-\rank(E)}(-1)^{\rank(E)}\sum_{A\subseteq E}(-1)^{|A|}\chi^*_{X|_A}(q) \chi_{\tilde X/A}(p)
\end{equation} 
for any $p\in \Z$ and $q\in \Z_A(X)$. For $p\in \Z_A(X)$ and any $q\in \Z$ we obtain
 \begin{equation}
\aritutte_{X}(1-p,1-q) = 
p^{\rank(G)-\rank(E)}(-1)^{\rank(E)}\sum_{A\subseteq E}(-1)^{|A|}\chi^*_{\tilde X|_A}(q) \chi_{X/A}(p).
\end{equation} 

 Lists with these properties arise naturally when studying arithmetic matroids defined by labeled graphs
 \cite{moci-adderio-flow-2013}.
 In this case
 $X$ is a list of vectors coming from a labeled graph and $\tilde X$ 
 is the totally unimodular list of vectors that represents the underlying graphic matroid.
 Arithmetic matroids that can be represented by a quotient of a scaled unimodular list
 are studied in more detail in  \cite{lenz-ppcram-2017}. They can be characterized as arithmetic matroids
 that are regular and strongly multiplicative.
\end{Remark}

 \section{Background}
 \label{Section:Background}

\subsection{Matroids and polymatroids}

Let $M$ be a finite set and $\rank : M \to \Z_{\ge 0}$ be a function that satisfies the following axioms:
\begin{compactitem}
\item
 $\rank(\emptyset)=0$,
\item
 $\rank(A)\le \rank(B)$ for all $A\subseteq B\subseteq M$, and
\item 
 $\rank(A \cup B ) + \rank(A \cap B ) \le \rank(A) + \rank( B )$ for all $ A, B \subseteq M$. 
\end{compactitem}
Then the polytope
\begin{equation}
   \left\{ x \in \R^M : 0 \le \sum_{ i \in S} x_i \le \sum_{ i \in S} \rank(x_i) \text{ for all } S\subseteq M \right\}
\end{equation}
 is  called a \emph{discrete polymatroid} and $\rank$ is its rank function  \cite[Chapter~44]{schrijver-co-volB}.

 A \emph{matroid} is a pair $(M,\rank)$, where $M$ denotes a finite set
and the rank function $\rank : 2^M \to \Z_{\ge 0}$ satisfies the axioms of the rank function of a discrete polymatroid
and in addition,  $\rank(A\cup \{a\}) \le \rank(A) +1 $ for all $A\subseteq M$ and $a\in M$ holds.
See \cite{MatroidTheory-Oxley} for more details.
 Let $\K$ be a field.
 A %
 matrix $X$ with entries in $\K$ 
 defines a matroid in a canonical way:
 $M$ is the set of columns of the matrix and the rank function 
 is the rank function from linear algebra.
 A matroid that  can be represented 
 in such a way is called \emph{representable over $\K$}.

\subsection{Arithmetic matroids}
\begin{Definition}[D'Adderio--Moci, Br\"and\'en--Moci
\cite{branden-moci-2014,moci-adderio-2013}]
 An \emph{arithmetic matroid} is a triple $(M, \rank, m)$, where
 $(M, \rank)$ is a matroid and $m : 2^M \to \Z_{\ge 1}$ is the
 \emph{multiplicity function}
 that satisfies certain axioms:
 \begin{itemize}
   \item[(P)]  Let $R\subseteq S\subseteq M$. The set $[R,S]:=\{A: R\subseteq A\subseteq S\}$ is called a \emph{molecule} 
 if $S$ can be written as the disjoint union $S=R\cup F_{RS}\cup T_{RS}$ and for each $A\in [R,S]$,
 $\rank(A)= \rank(R) + \abs{A \cap F_{RS}}$ holds.
 For each molecule   $[R,S] \subseteq M$, the following inequality holds 
 \begin{equation}
  \label{eq:Paxiom}
  \rho(R,S) := (-1)^{\abs{T_{RS}}} \sum_{A\in [R,S]} (-1)^{\abs{S} - \abs{A} } m(A) \ge 0.
 \end{equation}
  \item[(A1)]  For all $A \subseteq M$ and $e \in M$:
if $\rank(A \cup \{e\}) = \rank(A)$, then $m(A \cup \{e\})| m(A)$.
Otherwise $m(A) | m(A \cup \{e\})$.
  \item[(A2)]  If $[R, S]$ is a molecule, then $m(R)m(S) = m(R \cup F ) m(R \cup T )$.
  \end{itemize}
 A \emph{pseudo-arithmetic matroid} is a triple $(M, \rank, m)$, where
 $(M, \rank)$ is a matroid and $m : 2^M \to \R_{\ge 0}$ satisfies (P).
 A \emph{quasi-arithmetic matroid} is a triple $(M, \rank, m)$, where
 $(M, \rank)$ is a matroid and $m : 2^M \to \Z_{\ge 1}$ satisfies (A1) and (A2).
\end{Definition}

 The prototypical example of an arithmetic matroid 
 is defined by a list of vectors $X$ in $\Z^d$.
 In this case, for a sublist   $S$ of $d$ vectors that form a basis, we have $m(S) = \abs{\det(S)}$
 and in general $m(S) := \abs{\left\langle x \in S\right\rangle_\R \cap \Z^d / \left\langle x \in S\right\rangle_\Z}$. 
 As quotients of $\Z^d$ are in general not free groups, the following definition will use a slightly more general setting.

\begin{Definition}
 Let $\Acal = (M, \rank, m)$ be an arithmetic matroid.
 Let $G$ be a finitely generated abelian group and $X $ a finite list of elements of $G$ that is indexed by $M$.
 For $A\subseteq M$, let  $G_A$ denote the maximal subgroup of $G$ \st $ \abs{G_A / \left\langle A \right\rangle}$ is finite.

  $X$ is called a \emph{representation} of $\Acal$ if the matroid defined by $X$ is isomorphic to $(M,\rank)$ 
  and 
  $m(A) = \abs{G_A / \left\langle A \right\rangle}$.
  The arithmetic matroid $\Acal$ is called \emph{representable} if it has a representation $X$.
\end{Definition}

 Given a representation $X\subseteq \Z^d$ of an arithmetic matroid, it is easy to calculate its 
 multiplicity function \cite[p.~344]{moci-adderio-2013}:
 let $A\subseteq X$, then 
        \begin{equation}
 \label{eq:dependentmultiplicity}
 m(A) = \gcd( \{ m(B) : B \subseteq A \text{ and  } \abs{B} = \rank(B) = \rank(A) \}).
 \end{equation}
 If $A$ is independent, then  $m(A)$ is the greatest common divisor of all minors of size $\abs A$ of the matrix $A$
 (cf.~\cite[Theorem~2.2]{stanley-1991}).

 \subsection{Arithmetic matroids defined by labeled graphs}
   \label{Subsection:LabelledGraphs}
   A \emph{labeled graph} is a graph $\Gcal=(V,E)$ together with a map $\ell : E \to \Z_{\ge 1}$.
   The graph $\Gcal$ is allowed to have multiple edges, but no loops.
   The set of edges is partitioned into a set $R$ of \emph{regular edges} and a set $D$ of \emph{dotted edges}.
   Such a graph defines a graphic arithmetic matroid  \cite{moci-adderio-flow-2013}.
   Its definition extends the usual construction of the matrix representation of a graphic matroid
   by the oriented incidence matrix:
   let $ V = \{v_1,\ldots, v_n\}$. We fix an arbitrary orientation $\theta$ of $E$ \st each edge $e\in E$ can be identified with an ordered pair $(v_i,v_j)$.
   To each edge $e = ( v_i , v_j )$, we associate the element $x_e \in \Z^n$ defined as the vector
   whose $i$th coordinate is $-\ell(e)$ and  whose $j$th coordinate is $\ell(e)$.
    Then we define the  list $X_R := (x_e)_{e\in R}$
   and the group
   $ G := \Z^n / \langle \{ x_e : e \in D \} \rangle$. We denote by $\Acal(\Gcal,\ell)$
   the arithmetic matroid represented by the 
     projection of  $X_R$ to $G$.
   The multiplicity function can easily be calculated: for any $A\subseteq R$
     \begin{equation*}
      m(A) = \gcd \bigg( \bigg\{ \prod_{e\in T} \ell(e) : T \text{ maximal independent subset of } A \cup D \bigg\} \bigg)  \text{ holds.}   
     \end{equation*}

\subsection{Delta-matroids and the Bollob\'as--Riordan polynomial}
\label{Subsection:DeltaMatroidsDefinition}
A \emph{delta-matroid} $D$ is a pair $(E,\mathcal{F})$, where  $E$ denotes a finite set and $ \emptyset \neq \mathcal{F}\subseteq 2^E$ satisfies 
the \emph{symmetric exchange axiom}: for all $S,T\in \mathcal{F}$, if there is an element $u\in S\triangle T$, then 
there is an element $v\in S\triangle T$ such that $S\triangle \{u,v\}\in \mathcal{F}$.   
The elements of $\mathcal{F}$ are called \emph{feasible sets}.
If the sets in $\Fcal$ all have the same cardinality, then 
$(E,\Fcal)$ satisfies the basis axioms of a matroid.
Let $D=(E,\Fcal)$ be a delta-matroid
 and let $\mathcal{F}_{\max}$ and $\mathcal{F}_{\min}$ be the set of feasible sets of maximum and minimum cardinality, 
respectively.  
Define $D_{\max}:=(E,\mathcal{F}_{\max})$ and  $D_{\min}:=(E,\mathcal{F}_{\min})$ to be the 
 \emph{upper matroid} and  \emph{lower matroid} for $D$, respectively  \cite{bouchet-1989}.
Let $
\rank_{\max}$ and $\rank_{\min}$ denote the corresponding rank functions.
In  \cite{chun-moffat-noble-rueckriemen-2014}, the following delta-matroid rank function was defined:
$\rho(D) :=  \frac{1}{2}( \rank_{\max}(D)  +   \rank_{\min}(D) )$, 
and
  $\rho(A) :=   \rho(D|_A) \text{ for } A \subseteq E$.
This can be used to define
 the \emph{(2-variable) Bollob\'as-Riordan polynomial} 
\begin{equation} 
\tilde{R}_D(x,y) :=  \sum_{A\subseteq E}  (x-1)^{\rho(E)-\rho(A)}(y-1)^{|A|-\rho(A)}.
\end{equation} 
If $D$ is a matroid, then  $\rho$ is its rank function.
Note that the delta-matroid rank function $\rho$ is different from Bouchet's birank \cite{bouchet-1989}.
  A delta-matroid is even
 if all feasible sets have the same parity. A ribbon graph defines an even delta-matroid if and only if 
it is orientable \cite[Proposition~5.3]{chun-moffat-noble-rueckriemen-2014}.

 \section{Proofs}
 \label{Section:Proofs}

To prove Theorem~\ref{Theorem:ArithmeticConvolution},
we adapt the proof of Kook--Reiner--Stanton \cite{kook-reiner-stanton-1999} to our more general setting.
We first define a convolution product and note some useful lemmas. 
Two ranked sets with multiplicity are isomorphic if there exists a  bijection between the ground sets that preserves the rank and the multiplicity function.
Let $\matroidclass$ be the set of all isomorphism classes of ranked sets with multiplicity, and let $K$ be a commutative ring with $1$.
For any functions $f,g: \matroidclass \to K$, define the convolution
$f\circ g:\matroidclass \to K$ by
\begin{equation}
 (f\circ g)(M) = \sum_{A\subseteq M} f(M|_A) g(M/A).
\end{equation}
\begin{Lemma}
\label{Lemma:associativity}
The convolution
$\circ$ is associative, with identity element $\delta$, where
 \begin{equation}
   \delta(M) := \begin{cases}
                1 & \text{if } M = \emptyset \\
                0 & \text{otherwise}
               \end{cases}.
 \end{equation}
\end{Lemma}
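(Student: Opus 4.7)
The plan is to establish both associativity and the identity property by direct manipulation of the defining sum, once we have three compatibility identities between restriction and contraction.

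The key preliminary step is to verify, for any ranked set with multiplicities $M$ and any $A \subseteq B \subseteq M$, that the following equalities hold in $\matroidclass$:
\begin{align*}
(M|_B)|_A &= M|_A, \\
(M|_B)/A &= (M/A)|_{B \setminus A}, \\
(M/A)/(B \setminus A) &= M/B.
\end{align*}
Each identity is a routine check: the ground sets on both sides are manifestly equal, and for the rank and multiplicity functions one just unfolds the definitions. For instance, in the middle identity both sides have ground set $B \setminus A$, and for $C \subseteq B \setminus A$ both assign rank $\rank_M(C \cup A) - \rank_M(A)$ and multiplicity $m_M(C \cup A)$. The third identity uses the telescoping $\rank_{M/A}(C \cup (B\setminus A)) - \rank_{M/A}(B \setminus A) = \rank_M(C \cup B) - \rank_M(B)$.

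For associativity, I would expand both triple convolutions. On the one hand,
\begin{equation*}
((f \circ g) \circ h)(M) = \sum_{B \subseteq M} \sum_{A \subseteq B} f((M|_B)|_A)\, g((M|_B)/A)\, h(M/B).
\end{equation*}
On the other,
\begin{equation*}
(f \circ (g \circ h))(M) = \sum_{A \subseteq M} \sum_{C \subseteq M \setminus A} f(M|_A)\, g((M/A)|_C)\, h((M/A)/C).
\end{equation*}
The change of variables $(A, B) \leftrightarrow (A, C)$ with $C = B \setminus A$ (equivalently $B = A \cup C$) is a bijection between the index sets $\{(A,B) : A \subseteq B \subseteq M\}$ and $\{(A,C) : A \subseteq M,\ C \subseteq M \setminus A\}$. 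Combined with the three compatibility identities above, both sums collapse to the common expression
\begin{equation*}
\sum_{A \subseteq B \subseteq M} f(M|_A)\, g((M/A)|_{B \setminus A})\, h(M/B),
\end{equation*}
establishing associativity.

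For the identity property, I would observe that $\delta(M|_A) \neq 0$ forces the ground set of $M|_A$ to be empty, i.e.\ $A = \emptyset$, so
\begin{equation*}
(\delta \circ f)(M) = \delta(M|_{\emptyset})\, f(M/\emptyset) = f(M).
\end{equation*}
Likewise, $\delta(M/A) \neq 0$ forces $A = M$, giving $(f \circ \delta)(M) = f(M|_M)\, \delta(M/M) = f(M)$. The only mildly delicate point, which I would address by an explicit convention at the start, is that $M/M$ has empty ground set but carries the multiplicity value $m_M(M)$, so "being isomorphic to $\emptyset$" in $\matroidclass$ should be understood as having empty ground set; under this convention $\delta$ is well-defined and the identity property is immediate. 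This is the only conceptual wrinkle in an otherwise bookkeeping-style proof.
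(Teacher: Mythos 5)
Your proof is correct and follows essentially the same route as the paper: the crucial ingredient in both is the compatibility identity $(M/C)|_D = M|_{C\cup D}/C$ for disjoint $C,D$ (your middle identity), combined with the re-indexing $B = A\cup C$, and the identity-element check is the same easy observation. Your explicit remark that $\delta$ tests only whether the ground set is empty matches the paper's footnote that there are infinitely many ranked sets with multiplicity on the empty set.
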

\noindent Note that there are infinitely many ranked sets with multiplicity on the empty set.
\begin{proof}[Proof of Lemma~\ref{Lemma:associativity}]
 It is easy to see that $\delta$ is the identity element.
 
 Let $C,D \subseteq M$ and $C\cap D=\emptyset$. As in the case of matroids, $ (M / C)|_D = M|_{C\cup D}/C  $, holds:
 let $A\subseteq D$. Then by definition $ \rank_{(M / C)|_D}(A)  = \rank_M(A \cup C) - \rank_M(C) = \rank_{M|_{C\cup D}/C}(A) $.
 For the multiplicity function  by definition
 $ m_{(M / C)|_D}(A) = m_M(C \cup A) = m_{M|_{C\cup D}/C}(A)  $.

 Now let $f,g,h : \matroidclass \to K$.
 \begin{align}
   (( f \circ g ) \circ h ) (M) &=
    \sum_{ A  \subseteq  M  }  (f\circ g)( M|_A ) h( M / A)
  \\
    &=
    \sum_{  A  \subseteq  M  } \sum_{C\subseteq A}    f(M|_C) g(M|_A/C) h( M / A)
  \\
    &=
    \sum_{  C \subseteq A  \subseteq  M  }     f(M|_C) g(M|_A/C) h( M / A)
 \intertext{Now let $D:=A\setminus C$. Hence $A=C\cup D$ and we obtain}
    &\quad
    \sum_{  \substack{ C,D \subseteq  M \\ C \cap D = \emptyset } }      f(M|_C) g(M|_{C\cup D}/C) h( M / (C \cup D))
   \displaybreak[2]
   \\
    &=
      \sum_{ C \subseteq M  }  f  ( M|_C )  \sum_{ D \subseteq M \setminus C}   g((M/C)|_D ) h( (M/C) / D)  
\displaybreak[2]
     \\
    &=
      \sum_{ C \subseteq M  }  f  ( M|_C )  ( (g \circ h)( M / C)  )
   \\
  &=
   ( f \circ  ( g  \circ h )) (M). \notag\qedhere
 \end{align}

\end{proof}

Following Crapo \cite{crapo-1969}, let 
$\zeta(x,y)(M) %
:= x^{\rank(M)}y^{\abs{M} - \rank(M)}$, where $K = R[x,y]$.
The following simple lemma was proven for matroids in \cite{kook-reiner-stanton-1999}.
It is easy to verify that the same proof also works in our setting. Here, $\rank(\emptyset)=0$ is required.
\begin{Lemma}
\label{Lemma:zetaInverse}
 $\zeta(x,y)^{-1} = \zeta(-x,-y)$.
\end{Lemma}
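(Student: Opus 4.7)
The claim is that $\zeta(-x,-y)$ is the two-sided inverse of $\zeta(x,y)$ under the convolution product $\circ$. Since $\circ$ is associative by Lemma~\ref{Lemma:associativity}, it suffices to check one-sided invertibility, so my plan is to compute $(\zeta(x,y) \circ \zeta(-x,-y))(M)$ directly from the definition and show it equals $\delta(M)$.

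Writing out the definitions, for any ranked set with multiplicity $M$ one has
\begin{align}
 (\zeta(x,y) \circ \zeta(-x,-y))(M)
 &= \sum_{A\subseteq M} \zeta(x,y)(M|_A)\, \zeta(-x,-y)(M/A) \\
 &= \sum_{A\subseteq M} x^{\rank(A)} y^{\abs{A}-\rank(A)} (-x)^{\rank(M)-\rank(A)} (-y)^{\abs{M}-\abs{A}-\rank(M)+\rank(A)}.
\end{align}
Here I used that $\rank_{M/A}(M\setminus A) = \rank(M) - \rank(A)$ and $\abs{M\setminus A} = \abs{M}-\abs{A}$, which follow immediately from the definitions of restriction and contraction given in the introduction (no properties of $m$ are needed since $\zeta$ does not involve $m$).

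The next step is a purely formal simplification. Collecting the powers of $x$ gives $x^{\rank(M)}$ and collecting the powers of $y$ gives $y^{\abs{M}-\rank(M)}$, both independent of $A$. The sign factors combine as
\begin{equation}
 (-1)^{\rank(M)-\rank(A)} \cdot (-1)^{\abs{M}-\abs{A}-\rank(M)+\rank(A)} = (-1)^{\abs{M}-\abs{A}}.
\end{equation}
Hence the sum factors as
\begin{equation}
 (\zeta(x,y) \circ \zeta(-x,-y))(M) = x^{\rank(M)} y^{\abs{M}-\rank(M)} \sum_{A\subseteq M} (-1)^{\abs{M}-\abs{A}} = x^{\rank(M)} y^{\abs{M}-\rank(M)} (1-1)^{\abs{M}}.
\end{equation}
This equals $0$ whenever $\abs{M}\ge 1$, and when $M=\emptyset$ the hypothesis $\rank(\emptyset)=0$ (built into the definition of a ranked set with multiplicities) forces the value to be $1$. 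In either case the result is $\delta(M)$, which is what we wanted.

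There is no real obstacle here; the argument is a one-line binomial cancellation once the exponents are tracked carefully. The only subtlety worth stating explicitly is that the computation uses none of the multiplicity data, so the same argument works uniformly for matroids, arithmetic matroids, delta-matroids, and polymatroids; the only structural fact needed is $\rank(\emptyset)=0$ together with the compatibility of ranks and cardinalities under restriction and contraction. Once invertibility on one side is established, associativity from Lemma~\ref{Lemma:associativity} gives invertibility on the other side for free.
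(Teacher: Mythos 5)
Your computation is correct and is essentially the paper's own argument: the paper gives no proof beyond citing Kook--Reiner--Stanton and noting that $\rank(\emptyset)=0$ is required, and the cited proof is exactly this binomial cancellation, valid uniformly because $\zeta$ ignores the multiplicity function. One small caveat: in a monoid, associativity alone does not promote a one-sided inverse to a two-sided one, so your closing remark is not a valid justification as stated --- but this costs nothing here, since your computation is symmetric under $(x,y)\mapsto(-x,-y)$ and therefore yields $\zeta(-x,-y)\circ\zeta(x,y)=\delta$ directly.
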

Note that $\zeta$ only depends on the matroid, but not on the multiplicity function $m$.
We will also need two weighted versions of 
$\zeta$, namely
\begin{align}
\xi(x,y)(M) &:= m_M(M) x^{\rank(M)}y^{\abs{M}-\rank(M)} \\
\text{and }
\xi^*(x,y)(M) &:=   m_M(\emptyset) x^{\rank(M)}y^{\abs{M}-\rank(M)}.
\end{align}
If $M$ is an arithmetic matroid, then 
 $\xi^{*}(M)=  m^*_{M}(M) x^{\rank(M)}y^{\rank(M^*)}$,
 since
 $\rank(M^*)=\abs{M} - \rank(M)$  and
 the dual multiplicity is defined by $ m^*(A) := m( M \setminus A ) $.

The following well-known 
description of the Tutte polynomial \cite{crapo-1969} generalizes to our setting. 
\begin{Lemma}
\label{Lemma:TutteZetaConvolution}
 \begin{align}
  \tutte_M(x+1,y+1) &=
   (\zeta(1,y) \circ \zeta(x,1)) (M) 
 \end{align}
\end{Lemma}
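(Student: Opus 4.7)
The plan is to verify the identity by expanding both sides directly from their definitions; no nontrivial combinatorial input is required beyond the bookkeeping established for ranked sets with multiplicity. The identity is a ``ranked set with multiplicity'' analogue of Crapo's observation, and the multiplicity function plays no role whatsoever since $\zeta$ does not see $m$.

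First I would write out the right-hand side. By definition of the convolution product,
\begin{equation*}
  (\zeta(1,y) \circ \zeta(x,1))(M) = \sum_{A \subseteq M} \zeta(1,y)(M|_A)\, \zeta(x,1)(M/A).
\end{equation*}
Next I would simplify each factor using the definition of $\zeta$ together with the definitions of restriction and contraction. For the restriction, $\rank_{M|_A}(A) = \rank_M(A)$ and $|M|_A| = |A|$, so
\begin{equation*}
  \zeta(1,y)(M|_A) = 1^{\rank(A)}\, y^{|A|-\rank(A)} = y^{|A|-\rank(A)}.
\end{equation*}
For the contraction, $\rank_{M/A}(M\setminus A) = \rank_M(M) - \rank_M(A)$ and $|M\setminus A| = |M|-|A|$, whence the exponent of $y$ in $\zeta(x,1)(M/A)$ collapses, giving
\begin{equation*}
  \zeta(x,1)(M/A) = x^{\rank(M)-\rank(A)}.
\end{equation*}

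Putting these together, the right-hand side becomes
\begin{equation*}
  \sum_{A \subseteq M} x^{\rank(M)-\rank(A)}\, y^{|A|-\rank(A)},
\end{equation*}
which is exactly the expansion of $\tutte_M(x+1,y+1)$ obtained by substituting $x \mapsto x+1$ and $y \mapsto y+1$ in the defining formula for $\tutte_M$. This completes the verification.

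There is no real obstacle here; the only thing to be careful about is that $\rank(\emptyset)=0$ is used implicitly when evaluating $\zeta$ on empty restrictions or contractions, and that the rank identity for contractions is precisely the one built into our definition of $\rank_{M/A}$. Since neither $\zeta(1,y)$ nor $\zeta(x,1)$ depends on $m$, the multiplicity function does not enter; the lemma is purely a statement about the underlying rank function and the cardinality of the ground set.
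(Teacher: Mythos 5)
Your proof is correct and matches the paper's approach: the paper derives this lemma as the special case $m\equiv 1$ of Lemma~\ref{Lemma:AritutteConvolution}, whose proof is exactly the direct expansion of the convolution you carry out, using $\rank_{M/A}(M\setminus A)=\rank_M(M)-\rank_M(A)$ and $|M|_A|=|A|$ to collapse each factor.
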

Lemma~\ref{Lemma:TutteZetaConvolution} is actually a special case ($m\equiv 1)$ of the next lemma.
\begin{Lemma}
\label{Lemma:AritutteConvolution}
 \begin{align}
  \aritutte_M(x+1,y+1) &=
   (\xi(1,y) \circ \zeta(x,1)) (M) =   (\zeta(1,y) \circ \xi^*(x,1)) (M)
 \end{align}
\end{Lemma}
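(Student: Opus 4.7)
The plan is to prove both identities by direct expansion from the definitions, exactly parallel to the known derivation for $\tutte_M$ in Lemma~\ref{Lemma:TutteZetaConvolution}, with the multiplicity function $m$ carried through at the appropriate step. Substituting $(x+1,y+1)$ into the definition of the arithmetic Tutte function gives
\begin{equation}
  \aritutte_M(x+1,y+1) = \sum_{A\subseteq M} m_M(A)\, x^{\rank(M)-\rank(A)}\, y^{\abs{A}-\rank(A)},
\end{equation}
so both sides of the claimed identity need to be shown to collapse to this sum.

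For the first equality, I would write out the convolution
\begin{equation}
  (\xi(1,y)\circ\zeta(x,1))(M) = \sum_{A\subseteq M} \xi(1,y)(M|_A)\,\zeta(x,1)(M/A)
\end{equation}
and unpack each factor. The key observation is that $\xi(1,y)(M|_A) = m_{M|_A}(A)\cdot y^{\abs{A}-\rank_{M|_A}(A)}$, and by the very definition of restriction the multiplicity function on $M|_A$ is just the restriction of $m_M$, so $m_{M|_A}(A)=m_M(A)$ and $\rank_{M|_A}(A)=\rank_M(A)$. Similarly $\zeta(x,1)(M/A) = x^{\rank_{M/A}(M\setminus A)} = x^{\rank_M(M)-\rank_M(A)}$ using the definition of contraction. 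Multiplying these factors reproduces the expansion of $\aritutte_M(x+1,y+1)$ above.

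For the second equality I would proceed identically, except now the multiplicity appears on the contraction side: $\xi^*(x,1)(M/A) = m_{M/A}(\emptyset)\cdot x^{\rank_{M/A}(M\setminus A)}$, and by definition of contraction $m_{M/A}(\emptyset)=m_M(A\cup\emptyset)=m_M(A)$, while $\zeta(1,y)(M|_A) = y^{\abs{A}-\rank_M(A)}$. The terms then assemble to the same sum.

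There is no real obstacle here; the only thing to verify carefully is the bookkeeping of how $m$ behaves under restriction and contraction (so that the $m(A)$ factor lands in the right place in each of the two convolutions). The case $m\equiv 1$ of either identity recovers Lemma~\ref{Lemma:TutteZetaConvolution}, which is exactly the consistency check noted after its statement.
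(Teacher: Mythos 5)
Your proposal is correct and follows essentially the same route as the paper's own proof: both sides are expanded directly from the definitions of $\xi$, $\xi^*$, $\zeta$ and of restriction/contraction, with the key bookkeeping facts $m_{M|_A}(A)=m_M(A)$, $m_{M/A}(\emptyset)=m_M(A)$, and $\rank(M/A)=\rank_M(M)-\rank_M(A)$ used exactly as in the paper. No gaps.
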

\begin{proof}
  \begin{align}
      (\xi(1,y) \circ \zeta(x,1)) (M) 
       &= \sum_{ A \subseteq M  } m_M(A) y^{\abs{(M|_A)} - \rank(M|_A)}  x^{\rank(M/A)}
       \\
       &=  
       \sum_{ A \subseteq M  } m_M(A)  x^{\rank(M) - \rank(A)} y^{ \abs{A} -\rank(A)}
       = \aritutte_M(x+1, y+1)
       \\
      (\zeta(1,y) \circ \xi^*(x,1)) (M) 
       &= \sum_{ A \subseteq M  }  y^{\abs{(M|_A)} - \rank( M|_A )}  m_{(M/A)}( \emptyset   )  x^{\rank(M/A)}
       \\
       &=  
       \sum_{ A \subseteq M  } m_M(A)  x^{\rank(M) - \rank(A)} y^{ \abs{A} -\rank(A)}
       = \aritutte_M(x+1,y+1). \notag
  \end{align}
\end{proof}

\begin{proof}[Proof of Theorem~\ref{Theorem:ArithmeticConvolution}]
   Lemma~\ref{Lemma:AritutteConvolution} implies
   \begin{align}
    \aritutte_M(x+1,0) &=   %
    ( \zeta( 1, -1) \circ \xi^* (x, 1) ) (M) \\
  \text{and }  \aritutte_M(0,y+1) &=    ( \xi(1, y)  \circ \zeta ( -1, 1) ) (M). 
   \end{align}
Using Lemma~\ref{Lemma:zetaInverse} and Lemma~\ref{Lemma:TutteZetaConvolution} we obtain
  \begin{align}
   \sum_{A\subseteq M} & \aritutte_{M|_A}(0, y + 1) \tutte_{M/A}(x + 1,0) \\
   &= ( ( \xi(1, y)  \circ \zeta ( -1, 1) ) \circ ( \zeta( 1, -1) \circ \zeta (x, 1) ) ) (M) \\
   &= (  \xi(1, y)  \circ (\zeta ( -1, 1)  \circ   \zeta( 1, -1) )\circ \zeta (x, 1)  ) (M)  \\
   &= (  \xi(1, y)  \circ  \zeta (x, 1)  ) (M) 
   = \aritutte(x+1, y+1)  \\
\intertext{ and}
   \sum_{A\subseteq M}  &\tutte_{M|_A}(0, y+1) \aritutte_{M/A}(x+1,0) \\
   &= ( ( \zeta(1, y)  \circ \zeta ( -1, 1) ) \circ ( \zeta( 1, -1) \circ \xi^* (x, 1) ) ) (M) \\
   &= (  \zeta(1, y)  \circ (\zeta ( -1, 1)  \circ   \zeta( 1, -1) )\circ \xi^* (x, 1)  ) (M) \\
   &= (  \zeta(1, y)  \circ  \xi^* (x, 1)  ) (M)  
    =\notag \aritutte(x+1, y+1).\qedhere
   \end{align}
   
\end{proof}

\begin{proof}[Proof of Corollary~\ref{Corollary:PositivityOfCoefficients}]
 Using Theorem~\ref{Theorem:ArithmeticConvolution} twice, we obtain
 \begin{align}
   \aritutte_M(x,y) &= 
   \sum_{A\subseteq M}  \aritutte_{M|_A}(0, y) \tutte_{M/A}(x,0)
  \\ &= \sum_{A\subseteq M} \tutte_{M/A}(x,0)  \left(\sum_{B\subseteq A}  \tutte_{M|_B}(0, y) \aritutte_{M|_A /  B }(0,0) \right).
 \end{align}

 Hence it   is sufficient to show that   
 \begin{equation}
 \aritutte_M(0,0) =  \sum_{A\subseteq M} (-1)^{\rank(M) - \abs{A}} m(A) \ge 0
 \end{equation}
 for 
 any pseudo-arithmetic matroid $M$.
 This can be shown in various ways: 
 \begin{asparaenum}[(i)]
  \item induction.
  \item It is well-known that $2^M$ can be partitioned into molecules $[R,S]$ with $\rank(R)=\abs{R}$ and $\rank(S)=\rank(M)$ 
  (\cite[Proposition 4.4]{branden-moci-2014}, see also \cite{bjoerner-1992,crapo-1969}). For each such molecule
  $\rank(M) = \abs{R} + \abs{F_{RS}} = \abs{S\setminus T_{RS}}$ holds. Hence we obtain
  \begin{equation}
     \aritutte_M(0,0) = \sum_{\substack{[R,S]\\\text{molecule}}} \underbrace{(-1)^{\abs{T_{RS}}}\sum_{R\subseteq A\subseteq S} (-1)^{\abs{S} - \abs{A}} m(A)}_{\rho(R,S)\ge 0} \ge 0.
  \end{equation}
  \item In the case of an arithmetic matroid that is represented by a list of vectors it  follows from the interpretation of
  $\aritutte(0,q)$ in \cite{lenz-arithmetic-2016}. \qedhere
 \end{asparaenum}
 \end{proof}

\begin{proof}[Proof of Remark~\ref{Remark:toricvertexdecomposition}]
   We will now prove that 
   if we set $x=1$, the second expression for $\aritutte_M(x,y)$ in Theorem~\ref{Theorem:ArithmeticConvolution} is equivalent to 
   \cite[Lemma~6.1]{moci-tutte-2012}.

 Using \cite[Lemma~6.1]{moci-tutte-2012} and the classical convolution formula we obtain
 \begin{align}
  \aritutte_M(1,y) &=   \sum_{p\in \Vcal(X)} \tutte_{M_p}(1,y) \\
  &= \sum_{p \in \Vcal(X)} \sum_{A\subseteq M_p} \tutte_{(M_p)|_A}(0, y) \tutte_{M_p/A}(1, 0) \\
  &= \sum_{A\subseteq M}   \tutte_{M|_A}(0, y) \:  { \left( \sum_{ p : A \subseteq M_p }  \tutte_{M_p/A}(1, 0) \right) }    \label{eq:interesting}\\ 
  &= \sum_{A\subseteq M}   \tutte_{M|_A}(0, y)  {\left(\sum_{ \bar p \in \Vcal(M/A) }  \tutte_{(M/A)_{\bar p}}(1, 0)\right)} 
  \label{eq:interestingB}
\\
  &= \sum_{A\subseteq M}   \tutte_{M|_A}(0, y)    \aritutte_{M/A} (1,0).
 \end{align}
 Recall that the vertices of the generalized toric arrangement are contained in the generalized real torus 
 $\hom(G, S^1)$, where $G$ denotes a finitely generated abelian group.
  To verify the equality of \eqref{eq:interesting} and
 \eqref{eq:interestingB}, note that
 $\{p\in \Vcal(X)\subseteq \hom(G, S^1) : A\subseteq M_p\} = \{p\in \Vcal(X) : p(A)= \{1\} \} \leftrightarrow 
  \Vcal(X/A) \subseteq \hom( G / \langle A\rangle, S^1)$ 
 and $ M_p / A = (M/A)_{\bar p}$ since restriction and contraction commute.
  We have also used that since $A\subseteq M_p$, $\tutte_{M_p|A}(x,y)  = \tutte_{M|A}(x,y)$.
   
  For the other direction, note that $ m(A) = \abs{\{p\in \Vcal(X) : A\subseteq X_p\}}$ holds by \cite[Lemma~5.4]{moci-tutte-2012}.
   Hence 
   \begin{equation}
   \aritutte_X(1,0)
   = \sum_{p\in \Vcal(X)}   \sum_{\substack{A\subseteq X_p\\\rank(A)=\rank(X)}}
   (-1)^{\abs A -\rank(A)}  
   = \sum_{ p\in \Vcal(X) } \tutte_{X_p}(1,0).
   \end{equation}
   Now \cite[Lemma~6.1]{moci-tutte-2012} follows using essentially the same calculation as above.
\end{proof}

\begin{proof}[Proof of Corollary~\ref{Corollary:PforProduct}]
 Let $[R,S]$ be a molecule. We need to show that $\rho(R,S)$ is nonnegative for the multiplicity function $m_1m_2$.
 Note that the positivity axiom is closed under minors: for deletions it is obvious and for contractions it follows from the fact that
  $[R,S]$ is a molecule in the contraction  $M/e$ if and only if  $[R \cup \{e\}, S\cup \{e\}]$ is a molecule in $M$. 
 
 It is known that $\rho(R,S)$ is the 
 constant coefficient of the arithmetic Tutte polynomial
 obtained by restricting to $S$ and contracting 
 the elements in $R$. This was observed in the proof of 
 \cite[Lemma 4.5]{branden-moci-2014} using  \cite[Lemma 4.3]{branden-moci-2014}\footnote{Note that 
 \cite[Lemma 4.5]{branden-moci-2014} contains a small error: the factor $(y-1)^{\abs R - \rank(R)}$ is missing on the right-hand side of the first equation.
}.

Hence by Theorem~\ref{Theorem:GeneralisedMainTheorem} and Corollary~\ref{Corollary:PositivityOfCoefficients}
\begin{align}
 \rho(R,S) &= \aritutte_{((M,\rank,m_1  m_2)|_S)/R}(0,0) 
     \\
     &= \sum_{A\subseteq S \setminus R}  \underbrace{\aritutte_{ ((M,\rank,m_1)/R)   |_A }(0, 0)}_{\ge 0} \underbrace{\aritutte_{(M,\rank,m_2) / ( R \cup A)}(0,0)}_{\ge 0} \ge 0.
     \notag\qedhere
\end{align}

\end{proof}
\begin{proof}[Proof of Corollary~\ref{Corollary:TwoOne}]
It is known that
$\abs{Z(X)\cap \Z^d} = \aritutte(2,1)$ and 
$\bigl|\relint Z(X) \cap \Z^d\big| = \aritutte(0,1)$ \cite{moci-adderio-ehrhart-2012,stanley-1980,stanley-1991}.
The second equality is Theorem~\ref{Theorem:ArithmeticConvolution}.
The third follows from the fact that $\tutte_{M/A}(2,0)=0$ if $A$ is not a flat since in this case, $M/A$ contains a loop. 
Furthermore, the number of vertices of the zonotope is equal to the number of regions 
of the central hyperplane arrangement defined by $X$
\cite[Proposition~2.2.2]{OrientedMatroidsBook}. This number equals 
$\tutte_{M}(2,0)$ \cite{zaslavsky-1975}.
For a flat $A$, there is a canonical bijection between the vertices of $Z(X/A)$ and the faces of $Z(X)$ that correspond to $A$.
\end{proof}

\begin{proof}[Proof of Corollary~\ref{Corollary:squared}]
\begin{align}
 \aritutte_{X^2} (1-p,1-q)
 &= \sum_{A\subseteq X}  \aritutte_{X |_A}(0, 1-q) \aritutte_{X/A}(1-p,0)
\\&=\sum_{A \subseteq X} (-1)^{|A|-\rank(A)}\chi^*_{X|_{A}}(q)(-1)^{\rank(X/A)}p^{\rank(G/A)-\rank(X/A)}\chi_{X/A}(p) 
\\
\notag
 &=p^{\rank(G)-\rank(X)}\sum_{A \subset X} (-1)^{|A|-\rank(A)+\rank(X/A)}\chi^*_{X|_{A}}(q)\chi_{X/A}(p) 
  \\
&=p^{\rank(G)-\rank(X)}(-1)^{\rank(X)}\sum_{A\subseteq X}(-1)^{|A|}\chi^*_{X|_A}(q) \chi_{X/A}(p) .    
\end{align}
The first two steps use Theorems \ref{Theorem:GeneralisedMainTheorem} and~\ref{Theorem:BrandenMociFlowsColourings}.
The third uses
$\rank(X/A)-\rank(G/A) = \rank(X)-\rank(G)$. The last equality holds
because $(-1)^{\rank(X/A)-\rank(A)}=(-1)^{\rank(X)}$.
\end{proof}

\begin{proof}[Proof of Corollary~\ref{Corollary:FlowsColourings}]
This follows by the same argument as in the proof of Corollary \ref{Corollary:squared}, using
 Theorem~\ref{Theorem:ArithmeticConvolution} instead of
 Theorem~\ref{Theorem:GeneralisedMainTheorem} in the first step.
\end{proof}

\subsection*{Acknowledgements} We would like to thank Petter Br\"and\'en, Emanuele Delucchi, Alex Fink,
  Emeric Gioan,  Katharina Jochemko,
  Iain Moffat, and Mich\`ele Vergne for helpful comments and interesting discussions.
 We express our gratitude to the organizers of
 the joint session of the  
 \textit{Incontro Italiano di Combinatoria Algebrica} and the 
 \textit{S\'eminaire Lotharingien de Combinatoire}
 2015 in Bertinoro and the organizers of the \textit{Borel Seminar on Matroids in Algebra, 
 Representation  Theory and Topology} 2016 in Les Diablerets for providing
 a hospitable working 
 environment, where some of this work was carried out.
 The first author would like to thank the mathematics department of the \emph{Universit\'e de Fribourg} for the hospitality
 during his visit in November 2015.
  
  An extended abstract of this paper will appear in the proceedings of FPSAC 2017 
  (29th International Conference on Formal Power Series and Algebraic Combinatorics)
  \cite{backman-lenz-fpsac-2017}.

\renewcommand{\MR}[1]{} % no MR numbers

\bibliographystyle{amsplain}

\begin{thebibliography}{10}

\bibitem{backman-lenz-fpsac-2017}
Spencer Backman and Matthias Lenz, \emph{A convolution formula for {T}utte
  polynomials of arithmetic matroids and other combinatorial structures},
  Proceedings of the 29th International Conference on Formal Power Series and
  Algebraic Combinatorics (FPSAC), S\'eminaire Lotharingien Combinatoire, 2017,
  12 pages, to appear.

\bibitem{bajo-burdick-chmutov-2014}
Carlos Bajo, Bradley Burdick, and Sergei Chmutov, \emph{On the
  {T}utte-{K}rushkal-{R}enardy polynomial for cell complexes}, J. Combin.
  Theory Ser. A \textbf{123} (2014), 186--201. \MR{3157807}

\bibitem{barvinok-pommersheim-1999}
Alexander Barvinok and James~E. Pommersheim, \emph{An algorithmic theory of
  lattice points in polyhedra}, New perspectives in algebraic combinatorics
  ({B}erkeley, {CA}, 1996--97), Math. Sci. Res. Inst. Publ., vol.~38, Cambridge
  Univ. Press, Cambridge, 1999, pp.~91--147. \MR{1731815}

\bibitem{beck-breuer-godkin-martin-2014}
Matthias {Beck}, Felix {Breuer}, Logan {Godkin}, and Jeremy~L. {Martin},
  \emph{{Enumerating colorings, tensions and flows in cell complexes}}, {J.
  Comb. Theory, Ser. A} \textbf{122} (2014), 82--106 (English).

\bibitem{beck-kemper-2012}
Matthias Beck and Yvonne Kemper, \emph{Flows on simplicial complexes},
  Proceedings of 24th International Conference on Formal Power Series and
  Algebraic Combinatorics (FPSAC 2012) (Nancy, France), DMTCS Proceedings,
  Assoc. Discrete Math. Theor. Comput. Sci., 2012, pp.~817--826.

\bibitem{bjoerner-1992}
Anders Bj{\"o}rner, \emph{The homology and shellability of matroids and
  geometric lattices}, Matroid applications, Encyclopedia Math. Appl., vol.~40,
  Cambridge Univ. Press, Cambridge, 1992, pp.~226--283. \MR{1165544
  (94a:52030)}

\bibitem{OrientedMatroidsBook}
Anders Bj{\"o}rner, Michel Las~Vergnas, Bernd Sturmfels, Neil White, and
  G{\"u}nter~M. Ziegler, \emph{Oriented matroids}, second ed., Encyclopedia of
  Mathematics and its Applications, vol.~46, Cambridge University Press,
  Cambridge, 1999. \MR{1744046 (2000j:52016)}

\bibitem{bollobas-riordan-2002}
B\'ela {Bollob\'as} and Oliver {Riordan}, \emph{{A polynomial of graphs on
  surfaces.}}, {Math. Ann.} \textbf{323} (2002), no.~1, 81--96 (English).

\bibitem{bouchet-1989}
Andr\'e {Bouchet}, \emph{{Maps and $\Delta$-matroids.}}, {Discrete Math.}
  \textbf{78} (1989), no.~1-2, 59--71.

\bibitem{branden-moci-2014}
Petter Br{\"a}nd{\'e}n and Luca Moci, \emph{The multivariate arithmetic {T}utte
  polynomial}, Trans. Amer. Math. Soc. \textbf{366} (2014), no.~10, 5523--5540.
  \MR{3240933}

\bibitem{brylawski-oxley-1992}
Thomas Brylawski and James Oxley, \emph{The {T}utte polynomial and its
  applications}, Matroid applications, Encyclopedia Math. Appl., vol.~40,
  Cambridge Univ. Press, Cambridge, 1992, pp.~123--225. \MR{1165543
  (93k:05060)}

\bibitem{cameron-fink-2016}
Amanda Cameron and Alex Fink, \emph{A lattice point counting generalisation of
  the {T}utte polynomial}, 2016, \url{arXiv:1604.00962}.

\bibitem{chun-moffat-noble-rueckriemen-2014}
Carolyn Chun, Iain Moffatt, Steven Noble, and Ralf Rueckriemen, \emph{Matroids,
  delta-matroids and embedded graphs}, 2014, \url{arXiv:1403.0920}.

\bibitem{crapo-1969}
Henry~H. Crapo, \emph{The {T}utte polynomial}, Aequationes Math. \textbf{3}
  (1969), 211--229. \MR{0262095 (41 \#6705)}

\bibitem{moci-adderio-ehrhart-2012}
Michele D'Adderio and Luca Moci, \emph{Ehrhart polynomial and arithmetic
  {T}utte polynomial}, European J. Combin. \textbf{33} (2012), no.~7, 1479 --
  1483.

\bibitem{moci-adderio-2013}
\bysame, \emph{Arithmetic matroids, the {T}utte polynomial and toric
  arrangements}, Adv. Math. \textbf{232} (2013), no.~1, 335--367.

\bibitem{moci-adderio-flow-2013}
\bysame, \emph{Graph colorings, flows and arithmetic {T}utte polynomial},
  Journal of Combinatorial Theory, Series A \textbf{120} (2013), no.~1, 11 --
  27.

\bibitem{dahmen-micchelli-1985b}
Wolfgang Dahmen and Charles~A. Micchelli, \emph{On the solution of certain
  systems of partial difference equations and linear dependence of translates
  of box splines}, Trans. Amer. Math. Soc. \textbf{292} (1985), no.~1,
  305--320. \MR{805964 (86k:41014)}

\bibitem{BoxSplineBook}
Carl de~Boor, Klaus H{\"o}llig, and Sherman~D. Riemenschneider, \emph{Box
  splines}, Applied Mathematical Sciences, vol.~98, Springer-Verlag, New York,
  1993. \MR{1243635 (94k:65004)}

\bibitem{concini-procesi-book}
Corrado De~Concini and Claudio Procesi, \emph{Topics in hyperplane
  arrangements, polytopes and box-splines}, Universitext, Springer, New York,
  2011. \MR{2722776}

\bibitem{delucchi-moci-2016}
Emanuele Delucchi and Luca Moci, \emph{Colorings and flows on {CW} complexes,
  {T}utte quasi-polynomials and arithmetic matroids}, 2016,
  \url{arXiv:1602.04307}.

\bibitem{duchamp-nguyen-krajewski-tanasa-2013}
G\'erard {Duchamp}, Nguyen {Hoang-Nghia}, Thomas {Krajewski}, and Adrian
  {Tanasa}, \emph{{Recipe theorem for the Tutte polynomial for matroids,
  renormalization group-like approach.}}, {Adv. Appl. Math.} \textbf{51}
  (2013), no.~3, 345--358 (English).

\bibitem{etienne-las-vergnas-1998}
Gwihen {Etienne} and Michel {Las Vergnas}, \emph{{External and internal
  elements of a matroid basis}}, {Discrete Math.} \textbf{179} (1998), no.~1-3,
  111--119 (English).

\bibitem{kook-reiner-stanton-1999}
Woong Kook, Vic Reiner, and Dennis Stanton, \emph{A convolution formula for the
  {T}utte polynomial}, J. Combin. Theory Ser. B \textbf{76} (1999), no.~2,
  297--300. \MR{1699230 (2000j:05028)}

\bibitem{krajewski-moffatt-tanasa-2015}
Thomas Krajewski, Iain Moffatt, and Adrian Tanasa, \emph{Combinatorial {H}opf
  algebras and topological {T}utte polynomials}, 2015, \url{arXiv:1508.00814}.

\bibitem{lenz-arithmetic-2016}
Matthias Lenz, \emph{Splines, lattice points, and arithmetic matroids}, Journal
  of Algebraic Combinatorics \textbf{43} (2016), no.~2, 277--324.

\bibitem{lenz-ppcram-2017}
\bysame, \emph{On powers of {P}l\"ucker coordinates and representability of
  arithmetic matroids}, 2017, \url{arXiv:1703.10520}.

\bibitem{moci-tutte-2012}
Luca Moci, \emph{A {T}utte polynomial for toric arrangements}, Trans. Amer.
  Math. Soc. \textbf{364} (2012), no.~2, 1067--1088. \MR{2846363}

\bibitem{oxley-whittle-1993}
James Oxley and Geoff Whittle, \emph{A characterization of {T}utte invariants
  of {$2$}-polymatroids}, J. Combin. Theory Ser. B \textbf{59} (1993), no.~2,
  210--244. \MR{1244932 (94i:05024)}

\bibitem{MatroidTheory-Oxley}
James~G. Oxley, \emph{Matroid theory}, Oxford Science Publications, The
  Clarendon Press Oxford University Press, New York, 1992. \MR{1207587
  (94d:05033)}

\bibitem{schrijver-co-volB}
Alexander Schrijver, \emph{Combinatorial optimization. {P}olyhedra and
  efficiency. {V}ol. {B}}, Algorithms and Combinatorics, vol.~24,
  Springer-Verlag, Berlin, 2003, Matroids, trees, stable sets, Chapters 39--69.
  \MR{1956925 (2004b:90004b)}

\bibitem{stanley-1980}
Richard~P. Stanley, \emph{Decompositions of rational convex polytopes}, Ann.
  Discrete Math. \textbf{6} (1980), 333--342, Combinatorial mathematics,
  optimal designs and their applications (Proc. Sympos. Combin. Math. and
  Optimal Design, Colorado State Univ., Fort Collins, Colo., 1978). \MR{593545
  (82a:52007)}

\bibitem{stanley-1991}
\bysame, \emph{A zonotope associated with graphical degree sequences}, Applied
  Geometry and Discrete Combinatorics, DIMACS Series in Discrete Mathematics
  \textbf{4} (1991), 555--570.

\bibitem{zaslavsky-1975}
Thomas Zaslavsky, \emph{Facing up to arrangements: face-count formulas for
  partitions of space by hyperplanes}, Mem. Amer. Math. Soc. \textbf{1} (1975),
  no.~issue 1, 154, vii+102. \MR{0357135 (50 \#9603)}

\end{thebibliography}
\providecommand{\bysame}{\leavevmode\hbox to3em{\hrulefill}\thinspace}
\providecommand{\MR}{\relax\ifhmode\unskip\space\fi MR }
% \MRhref is called by the amsart/book/proc definition of \MR.
\providecommand{\MRhref}[2]{%
  \href{http://www.ams.org/mathscinet-getitem?mr=#1}{#2}
}
\providecommand{\href}[2]{#2}

\end{document}